\def\sm{\setminus}
\def\<{\langle}
\def\>{\rangle}
\numberwithin{equation}{section}
\def\<{\langle}
\def\>{\rangle}
\def\ra{\rightarrow}
\def \sm{\setminus}
\def\-{\overline}
\def\-{\overline}
\def\sm{\setminus}
\def\ra{\rightarrow}
\newtheorem{theorem}{Theorem}[section]
\newtheorem{lemma}[theorem]{Lemma}
\newtheorem{corollary}[theorem]{Corollary}
\newtheorem{proposition}[theorem]{Proposition}
\newtheorem{conjecture}[theorem]{Conjecture}
\newtheorem{example}[theorem]{Example}
\date{\ }
\begin{document}
\title{\bf Bergman-Einstein metric on  a Stein space with a strongly pseudoconvex boundary}

\author{{Xiaojun  Huang}\footnote{Supported by NSF grant DMS-2000050}
\qquad
Xiaoshan Li\footnote{Supported by  NSFC grant  No. 11871380}}
\maketitle

\begin{abstract}\vskip 3mm\footnotesize
\noindent Let $\Omega$ be a Stein space with a compact smooth
strongly pseudoconvex boundary. We prove that the boundary is
spherical if its Bergman metric over $\hbox{Reg}(\Omega)$ is
K\"ahler-Einstein. \vskip 4.5mm

\noindent {\bf 2000 Mathematics Subject Classification:} 32E10, 32Q45,
32Q20, 32D15.

%\noindent {\bf Keywords and Phrases:} $\overline\partial$-operator, $L^2$-estimate, $\overline\partial$-Dirichlet boundary condition, $\overline\partial$-Neumann boundary condition.

\end{abstract}
%\tableofcontents
\section{Introduction}
For any bounded  domain in $D\subset \mathbb C^n$,  its Bergman
metric is a canonical biholomorphically invariant  K\"ahler metric
over $D$. Cheng-Yau \cite{CY80} proved that there exists a complete
K\"ahler-Einstein metric on a bounded pseudoconvex domain in
$\mathbb C^n$ with a $C^2$-smooth boundary.   A well-known open
question initiated from the work of Cheng-Yau \cite{CY80}  asks when
the Bergman metric on a smoothly bounded  domain coincides with
 its Cheng-Yau K\"ahler-Einstein metric.
%It
%is well known that the Bergman metric on the unit ball is
%K\"ahler-Einstein. A long standing conjecture given by Cheng
%\cite{C79} states that
Cheng conjectured in \cite{C79}  that the Bergman metric of a
smoothly bounded strongly pseudoconvex domain is K\"ahler-Einstein
if and only if the domain is biholomorphic to the ball. This
conjecture was solved by Fu-Wong \cite{FW97} and Nemirovski-Shafikov
\cite{NS06} in the case of complex dimension two and was  verified
in a recent paper of Huang-Xiao \cite{HX16} for any dimensions. Recently, Ebenfelt-Xiao-Xu \cite{EXX20}  introduced a new characterization of the two-dimensional unit ball $\mathbb B^2$,
more generally, two-dimensional finite ball quotients $\mathbb B^2/\Gamma$ in terms of algebracity of the Bergman kernel.
There have been also other related studies on versions of the
Cheng's conjecture in terms of metrics defined by other important
canonical potential functions as in the work of Li \cite{L1, L2,
L3}.
%Recently, Huang-Xiao \cite{HX16} gave a
%confirmative answer to this conjecture in higher dimension.  A
%crucial ingredient of Huang-Xiao's proof of the Cheng's conjecture
%is a classical uniformization result of Lu \cite{Lu66} which is
%generalized  to stein space  in \cite{HX20}. In the same paper, they
%asked if the generalization of  Cheng's conjecture on Stein spaces
%with compact strongly pseudoconvex boundaries  is still true or not.
%The motivation of this paper is to study this problem.

On a complex space $\Omega$ with possible singularities, Kobayashi
\cite{Kob} defined the Bergman kernel  form on its smooth part ${\rm
Reg}(\Omega)$ which is naturally  identified with the Bergman kernel
function in the domain case. The Kobayashi Bergman kernel form can
be similarly used to define a K\"ahler form on ${\rm Reg}(\Omega)$
under certain geometric conditions  on $\Omega$, which are always
the case when $\Omega$ is a Stein space with a compact smooth
strongly pseudoconvex boundary. In this paper, we address the
generalized Cheng question of understanding the geometric
implication when the Bergman metric of a Stein space with a compact
strongly pseudoconvex boundary has the Einstein property.

To state our main theorem, we first introduce a few  notations. Let
$\Omega$ be a stein space of dimension $n$  with possibly isolated
singularity and write ${\rm Reg}(\Omega)$  for its regular part.
Write $\Lambda^n({\rm Reg}(\Omega))$  for the space of the
holomophic $(n, 0)$-forms on ${\rm Reg}(\Omega)$ and define the
Bergman space of $\Omega$ as follows:
$$A^2(\Omega):=\{f\in \Lambda^n({\rm
Reg}(\Omega)):(-1)^{\frac{n^2}{2}}\int_{{\rm Reg}(\Omega)} f\wedge
\overline f<\infty\}.$$  Then $A^2(\Omega)$ is a Hilbert sapce with
the inner product:
$$(f, g)=(-1)^{\frac{n^2}{2}}\int_{{\rm Reg}(\Omega)}f\wedge\overline g, ~\text{for all}~f,
 g\in \Lambda^n({\rm Reg}(\Omega)).$$ We assume that $A^2(\Omega)\not =\{0\}$.
  Let $\{f_j\}_1^N$ be an orthonormal basis of $A^2(\Omega)$ and
  define the Bergman kernel to be $K_\Omega=\sum_{j=1}^N f_j\wedge\overline f_j$. Here, $N$ is either
  a natural number or $\infty$.
   In a local holomorphic coordinate chart $(U, z)$ on ${\rm Reg}(\Omega)$, we have
$$K_\Omega=k_\Omega(z, \overline z) dz_1\wedge\cdots\wedge dz_n\wedge d\overline z_1\wedge\cdots\wedge d\overline z_n~\text{in}~ U.$$
Assume further that $K_\Omega$ is nowhere zero on ${\rm
Reg}(\Omega)$. We define a Hermitian $(1, 1)$-form on ${\rm
Reg}(\Omega)$ by $\omega^B_\Omega=i\partial\overline\partial \log
k_\Omega(z, \overline z).$ We call $\omega^B_\Omega$ the Bergman
metric  on $\Omega$ if it indeed induces a positive definite metric
on ${\rm Reg}(\Omega)$.

Notice that if $\Omega$ is a Stein space with a compact smooth
strongly pseudoconvex boundary then $\overline{\Omega}$ can be
compactly embedded into a closed  Stein subspace of a certain
complex Euclidean space. Then $A^2(\Omega)$ is of infinite dimension
and it indeed defines a Bergman metric on ${\rm Reg}(\Omega)$.

Our main purpose  of this paper is to generalize  results obtained
in \cite{FW97} and \cite{HX16} to Stein spaces:
\begin{theorem}\label{mthm-4-18}
Let $\Omega$ be a Stein space with  a compact smooth strongly
pseudoconvex boundary. If its Bergman metric $\omega_\Omega^B$ on
${\rm Reg}(\Omega)$ is K\"ahler-Einstein then $\partial\Omega$ is
spherical.
\end{theorem}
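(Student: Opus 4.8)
The plan is to reduce the assertion to the boundary analysis of smoothly bounded strongly pseudoconvex domains carried out in \cite{FW97} for $n=2$ and in \cite{HX16} in general, after showing that the interior singularities of $\Omega$ are invisible to the CR geometry of $\partial\Omega$. Sphericity of $\partial\Omega$ is a local property of the boundary CR structure, whereas by hypothesis the singular set of $\Omega$ is isolated and compactly contained in the interior. Hence there is a one-sided collar neighborhood of $\partial\Omega$ lying entirely in ${\rm Reg}(\Omega)$, and, composing with the embedding $\overline\Omega\hookrightarrow\mathbb{C}^N$ into a closed Stein subspace of some $\mathbb{C}^N$, this collar is biholomorphic to the part of a smooth strongly pseudoconvex domain near its boundary. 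All of the subsequent analysis is carried out on this collar, away from the singular set, where $\omega_\Omega^B$ is a genuine K\"ahler-Einstein metric.

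The first step is to record the Fefferman asymptotic expansion of the Bergman kernel near $\partial\Omega$,
$$k_\Omega=\frac{\phi}{\rho^{n+1}}+\psi\,\log\rho,$$
with $\rho$ a smooth defining function, $\phi,\psi$ smooth up to $\partial\Omega$ and $\phi>0$ there. The subtlety is that $k_\Omega$ is built from global $L^2$ holomorphic $n$-forms on ${\rm Reg}(\Omega)$, some of which may be singular at the interior singular points; nevertheless the log coefficient $\psi|_{\partial\Omega}$ is a local CR invariant of the boundary, and by the localization principle for the Bergman projection at a strongly pseudoconvex boundary the global kernel differs from the Bergman kernel of the local collar domain by a term smooth up to $\partial\Omega$. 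Consequently $k_\Omega$ has the same boundary jet, including the same coefficient $\psi$, as in the domain case, and the singularities contribute nothing to the boundary expansion.

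Next I translate the Einstein hypothesis into a complex Monge-Amp\`ere equation for $u:=\log k_\Omega$. Writing $g_{i\overline j}=\partial_i\partial_{\overline j}u$ for the components of $\omega_\Omega^B$, one has ${\rm Ric}(\omega_\Omega^B)=-i\partial\overline\partial\log\det(g_{i\overline j})$, and since $J_\Omega:=\det(g_{i\overline j})/k_\Omega$ is a globally defined function on ${\rm Reg}(\Omega)$, the identity ${\rm Ric}(\omega_\Omega^B)=\lambda\,\omega_\Omega^B$ is equivalent to $i\partial\overline\partial\log J_\Omega=-(1+\lambda)\,\omega_\Omega^B$. Klembeck's theorem, applied through the same localization, shows that $\omega_\Omega^B$ is asymptotically complex hyperbolic along $\partial\Omega$ and thereby pins down $\lambda=-1$; hence $\log J_\Omega$ is pluriharmonic, and after absorbing the resulting local holomorphic factor the potential obeys $\det(g_{i\overline j})=e^{u}$. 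Equivalently, $\rho_B:=k_\Omega^{-1/(n+1)}$ solves Fefferman's Monge-Amp\`ere equation $J(\rho_B)\equiv{\rm const}$ near $\partial\Omega$ up to a holomorphic gauge, so the Bergman potential coincides, up to normalization, with the Cheng-Yau potential.

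The decisive step is then to play the two boundary expansions now attached to the same object against each other: the Fefferman expansion of $k_\Omega$ above, and the Lee-Melrose-Graham expansion of the solution of $J(\rho_B)\equiv{\rm const}$, whose leading logarithmic coefficient is the CR obstruction of $\partial\Omega$ (for $n=2$ a nonzero multiple of the squared norm of the Cartan-Chern-Moser curvature). The Einstein identity forces these expansions to be mutually consistent to all orders, and the core of \cite{FW97, HX16} is exactly the argument showing that this consistency drives the Fefferman log term, and ultimately the Chern-Moser curvature tensor, to vanish identically along $\partial\Omega$; a compact CR manifold with vanishing Chern-Moser tensor is spherical. I expect the main obstacle to be twofold: first, verifying that the entire domain-case machinery --- Fefferman's expansion, the localization, Klembeck's curvature asymptotics, and the delicate matching of the transcendental $\log\rho$ terms under the Einstein constraint --- survives on a space with interior singularities, which is the genuinely new point; and second, the matching argument of \cite{HX16} itself, which is the hardest analytic ingredient but which, being entirely local near the smooth boundary, transplants once the reduction to the collar is justified.
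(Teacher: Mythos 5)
Your outline follows the same route as the paper (boundary localization of the Bergman kernel, Fefferman expansion, conversion of the Einstein condition into a Monge--Amp\`ere equation for $\log k_\Omega$, then the Fu--Wong/Huang--Xiao endgame), but it has a genuine gap at the step where you pass from ``$\log J_\Omega$ is pluriharmonic'' to ``$\det(g_{i\overline j})=c\,e^{u}$.'' Pluriharmonicity of $v:=\log\bigl(\det(g_{i\overline j})/k_\Omega\bigr)$ only gives $\det(u_{i\overline j})=e^{v}e^{u}$ with $v$ an a priori nonconstant pluriharmonic function, and ``absorbing the local holomorphic factor'' does not repair this: replacing $u$ by $u+v$ preserves the local Monge--Amp\`ere identity but destroys the identification of $e^{u}$ with the Bergman kernel, so the key consequence $r_F=(\tfrac{\pi^n}{n!}k_M)^{-1/(n+1)}$ --- which is what forces the log coefficient $\psi$ to vanish to infinite order --- requires $v$ to be genuinely \emph{constant}. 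On a bounded domain one could hope to get constancy from the maximum principle, but here $v$ is pluriharmonic only on ${\rm Reg}(\Omega)$ (equivalently on $M\setminus E$ after resolving), and near the singular set (or the exceptional divisor $E$) you have no control whatsoever; there is no boundary maximum principle available. The paper spends real effort on exactly this point: a Diederich-type boundary limit $B_M\to\frac{(n+1)^n\pi^n}{n!}$ (its Lemma 3.2, itself resting on the localization proposition), then an argument attaching analytic disks with boundaries in $\partial M$, on each of which $\log B_M$ is harmonic with constant boundary values and hence constant, followed by real-analytic propagation from the open set filled by such disks to all of $M\setminus E$. Your write-up contains no substitute for this step, and without it the Monge--Amp\`ere equation with constant right-hand side --- the engine of the whole proof --- is not established.

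A secondary issue: the localization statement you invoke (global kernel $=$ local domain kernel $+$ smooth error) is proved in the paper by $\overline\partial$-Neumann estimates on the \emph{global} space, with uniform bounds in the second variable obtained via Kerzman's estimate; this requires a global $L^2$ theory, which the paper secures by first passing to a resolution $M\to\Omega$ so that one works on a smooth strongly pseudoconvex manifold. Working only ``on the collar'' does not by itself give you the Neumann operator you need, since the Bergman projection is a global object. This is fixable (and the fix is exactly the paper's reduction to Theorem 2.1 on $M$), but it should be said.
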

%Our Proof is  based on  the  fundamental work of Chern-Moser
%\cite{CM74}, Fefferman \cite{Fe74, Fe76}, Burns-Graham \cite{Gr85},
%Christoffers \cite{Ch81}, Fu-Wong \cite{FW97}.

\section{Proof of Theorem \ref{mthm-4-18}}\label{4-19-a4}

In this section, we start with a strongly pseudoconvex complex manifold $M$ with  compact strongly pseudoconvex boundary. We denote by $E$ the exceptional set in $M$ in the sense of Grauert \cite{G62}, that is, there exists a blowing down map $\pi: M\rightarrow\Omega$ from $M$ to a stein space $\Omega$ with isolated singularities such that $\pi^{-1}({\rm Sing}(\Omega))=E$ and $\pi: M\setminus E\rightarrow \Omega\setminus {\rm Sing}(\Omega)$ is a biholomorphic map. Here, we denote by ${\rm Sing}(\Omega)$ the set of singularities in $\Omega$ and define ${\rm Reg}(\Omega):=\Omega\setminus{\rm Sing}(\Omega)$. Since the boundary of $M$ is strongly pseudoconvex then by a Theorem of
Oshawa \cite{Oh84} and Hill-Nacinovich \cite[Theorem 3.1]{HN05} there exists a larger complex manifold $M'$ which include $\overline M$ and $M$ as an open subset.

Let $\Omega^{n, 0}(\overline M)$ be the space of smooth $(n, 0)$-forms on $M$ which are smooth up to the boundary. Let $\Omega^{n, 0}_c(M)$ be the subspace of $\Omega^{n, 0}(\overline M)$ with elements  having compact support in $M$. We define the $L^2$ inner product on $\Omega_c^{n, 0}(M)$ as following
$$(f, g)=(-1)^{\frac{n^2}{2}}\int_M f\wedge \overline g~\text{for all}~f, g\in\Omega_c^{n, 0}(M).$$
Let $L^2_{(n, 0)}(M)$ be the completion of $\Omega_c^{n, 0}(M)$
under the above inner product. We denote by $H_s(M), s\in\mathbb R$
the Sobolev space of order $s$ on $M$ (see \cite[Appendix]{FK72}).
Write $\Lambda^n(M)$ for the space of the holomorphic $n$-forms on $M$ and we
 define the Bergman space of $M$ to be
 $$A^2(M)=\left\{f\in \Lambda^n(M): (-1)^{\frac{n^2}{2}}\int_M f\wedge\overline f
 <\infty\right\}.$$ Then $A^2(M)$ is a closed subspace of $L^2_{(n,
 0)}(M)$.

Let $P: L^2_{(n, 0)}(M)\rightarrow A^2(M)$ be the orthogonal
projection which we call the Bergman projection of $M$. The
reproducing kernel of the Bergman projection is denoted by $K_{M}(z,
w)$. Let $\{f_j\}_{j=1}^\infty$ be an orthnormal basis of $A^2(M)$.
Let $pr_1: M\times M\rightarrow M$ and $pr_2: M\times M\rightarrow
M$ be the natural projection from the product space. Then the
reproducing kernel of the Bergman projection $P$  is a $2n$-form on
$M\times M$ which  can be written as
$$K_M(z, \overline w)=\sum_{j=1}^\infty pr_1^\ast f_j\wedge pr_2^\ast \overline {f_j}=\sum_{j=1}^\infty f_j(z)\wedge \overline {f_j(w)}, \forall (z, w)\in M\times M.$$
Here, $f_j(z)$ and $f_j(w)$ are considered as a $(n, 0)$-forms at $(z, w)$ for each $j$. Then
$K_M(z, \overline z)$ can be considered as a $2n$-form on $M$ which is called Bergman
 kernel on $M$. Both $K_M(z, \overline w)$ and the Bergman kernel $K_M(z, \overline z)$ are
  independent of the choice of the orthonormal basis of $A^2(M)$. In a local coordinate
  chart $(U, z)$ of $M$ with $z=(z_1, \ldots, z_n)$ we have
\begin{equation}\label{20/5/10/a1}
K_M(z, \overline z)=k_M(z, \overline z)dz_1\wedge\cdots\wedge
dz_n\wedge d\overline{z_1}\wedge\cdots\wedge d\overline{z_n},
\end{equation} where
$k_M(z, \overline z)=\sum_{j=1}^\infty |\hat f_j(z)|^2$ with
$f_j=\hat f_j(z)dz_1\wedge\cdots\wedge dz_n$. Then
$\omega^B_M=\partial\overline\partial \log k_M$  is a well defined
Hermitian $(1, 1)$-form on $M$ where $K_M$ is nonzero. We call
$\omega^B_M$ the Bergman metric over the subset where it is
positive definite.

Since the Bergman metric over ${\rm Reg}(\Omega)$ is well defined, thus $\omega^B_M$ is a well defined Bergman metric on $M\setminus E$.  Write $g^M_{\alpha\overline\beta}=\frac{\partial^2\log k_M}{\partial z_\alpha\partial\overline z_\beta}$ and define $G_M(z):=\det(g^M_{\alpha\overline \beta})$. Then the Ricci
tensor of the Bergman metric on $M\setminus E$ is given by
$$R^M_{\alpha\overline\beta}(z)=-\frac{\partial ^2\log G_M(z)}{\partial z_\alpha\partial
\overline z_\beta}.$$ The Bergman metric on $M\setminus E$ is called K\"ahler-Einstein
when $R^M_{\alpha\overline\beta}=c g^M_{\alpha\overline \beta}$ for some constant $c$.
 It is well-known that the constant
$c$ is necessary negative (as we will also see later).  Since
$\omega^B_M=\pi^\ast \omega_\Omega^B$ over $M\setminus E$,
 thus $\omega^B_M$ is K\"ahler-Einstein over $M\setminus E$ if and only if $\omega_\Omega^B$
 is K\"ahler-Einstein over ${\rm Reg}(\Omega)$.

Now, an equivalent version of Theorem \ref{mthm-4-18} is as follows:
\begin{theorem}\label{mthm1}
Let $M$ be a complex manifold with a compact smoothly stronlgy pseudoconvex pseudoconvex boundary. If the Bergman metric on $M\setminus E$ is Kahler-Einstein, then $\partial M$ is spherical.
\end{theorem}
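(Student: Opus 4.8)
The plan is to reduce the global assertion to a purely local statement on a one-sided neighborhood of $\partial M$ and then run the mechanism developed for smoothly bounded domains. Since the exceptional set $E=\pi^{-1}(\mathrm{Sing}(\Omega))$ lies in the interior and $\partial M$ is smooth and strongly pseudoconvex, there is a collar $U$ of $\partial M$ in $\overline M$, disjoint from $E$, on which $M$ is an ordinary smoothly bounded strongly pseudoconvex region sitting inside the ambient manifold $M'$. Sphericity of $\partial M$ is a local CR condition, so it suffices to analyze $k_M$ on $U$. I would first record that, although $k_M$ is the intrinsic Bergman kernel of the Stein space, its boundary behaviour is governed only by the local CR geometry: by Fefferman's expansion (in the manifold form supplied by Boutet de Monvel--Sjöstrand via the embedding into $M'$), on $U$ one has $k_M=\phi\, r^{-(n+1)}+\psi\log r$ with $\phi,\psi$ smooth up to $\partial M$ and $\phi>0$ on $\partial M$, where $r$ is a defining function.

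Next I would rewrite the Einstein condition. By Klembeck's asymptotics the Bergman metric near a strongly pseudoconvex boundary approaches the complex hyperbolic metric, so matching leading terms forces the Einstein constant to be $c=-(n+1)$. With $G_M=\det(g^M_{\alpha\bar\beta})$ and $g^M_{\alpha\bar\beta}=\partial_\alpha\partial_{\bar\beta}\log k_M$, the relation $R^M_{\alpha\bar\beta}=-\partial_\alpha\partial_{\bar\beta}\log G_M=c\,g^M_{\alpha\bar\beta}$ is equivalent to $\partial_\alpha\partial_{\bar\beta}\log\!\big(G_M/k_M^{\,n+1}\big)=0$. Hence the biholomorphically invariant positive function $u:=G_M/k_M^{\,n+1}$ has $\log u$ pluriharmonic on $U$, so locally $u=|F|^2$ for a nowhere-zero holomorphic $F$; equivalently $\log k_M$ solves the Cheng--Yau Monge--Ampère equation up to a pluriharmonic factor, and the Bergman metric coincides near $\partial M$ with the complete Kähler--Einstein metric.

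Then I would extract the boundary CR invariants. Substituting the Fefferman expansion of $k_M$ into $u=G_M/k_M^{\,n+1}$ and using the Lee--Melrose description of the logarithmic terms, the function $u$ acquires a boundary asymptotic expansion whose log coefficients are, to leading order, universal nonzero multiples of the Fefferman obstruction / Chern--Moser invariants of $\partial M$. Pluriharmonicity of $\log u$ is incompatible with a nontrivial $\log r$ term, since $\log r$ is not pluriharmonic when $i\partial\bar\partial r$ is nondegenerate on the complex tangent space; this forces the relevant invariants to vanish on $\partial M$. I expect this identification---carrying the expansion out precisely enough to read off the CR invariants, uniformly in the order---to be one of the technical cores of the argument.

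Finally I would pass from vanishing invariants to sphericity. In dimension $n=2$ the vanishing of the single relevant (Cartan) tensor is exactly sphericity, recovering Fu--Wong and Nemirovski--Shafikov; in general dimension this implication is the delicate step, and I would invoke the mechanism of Huang--Xiao \cite{HX16} to conclude that the full vanishing forced by the Einstein condition yields a spherical CR structure, rather than mere obstruction-flatness. The main obstacle is therefore twofold: controlling the entire tower of logarithmic coefficients (not just the leading one) and concluding sphericity in all dimensions, together with verifying that the Stein/singular structure is genuinely inert---that is, that the Einstein equation and the Fefferman expansion used on $U$ are unaffected by $E$, which is ensured by the locality of both near $\partial M$.
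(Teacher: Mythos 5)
Your overall roadmap --- localize near $\partial M$ away from $E$, convert the Einstein condition into pluriharmonicity of an invariantly defined potential, feed this into Fefferman's expansion, and finish with Burns--Graham for $n=2$ and Christoffers/Huang--Xiao for $n\ge 3$ --- is the same as the paper's. But two steps you treat as citable facts are in fact the technical core, and one of them is a genuine gap. The localization is not free: the paper's Proposition \ref{localization of Bergman kernel} proves, via a $\overline\partial$-Neumann/Kerzman argument with estimates uniform in the second variable, the precise identity $k_M=k_D+\varphi$ with $D$ a local model domain and $\varphi$ smooth up to $B(p,\delta)\cap\overline M$; it is this identity (not merely the existence of some Fefferman-type expansion of $k_M$ on a collar) that is used in every subsequent lemma, in particular to prove the generalized Diederich boundary limit $B_M=G_M/k_M\to (n+1)^n\pi^n/n!$ and to compare the expansions of $k_M$ and $k_D$ coefficient by coefficient.

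The more serious gap is that you stop at pluriharmonicity of $\log u$ and never upgrade it to the exact constancy $G_M/k_M\equiv(n+1)^n\pi^n/n!$ on $M\setminus E$. (Note also the normalization slip: with the paper's convention $g^M_{\alpha\overline\beta}=\partial_\alpha\partial_{\overline\beta}\log k_M$ the Einstein constant is $c=-1$ and the pluriharmonic quantity is $\log(G_M/k_M)$, not $\log(G_M/k_M^{n+1})$ --- check against the ball, where $G/k=(n+1)^n\pi^n/n!$ is constant while $G/k^{n+1}$ is not.) The constancy, not mere pluriharmonicity, is what yields the exact Monge--Amp\`ere identity $J(k_M)=(-1)^nC_nk_M^{n+2}$, hence that $\left(\frac{\pi^n}{n!}k_M\right)^{-1/(n+1)}$ is an exact local solution of Fefferman's equation and can be taken as the Fefferman defining function $r_F$; only then do the Fu--Wong computations give $\psi=O(r_F^k)$ for all $k$ and $\phi-\frac{n!}{\pi^n}=O(r_F^{n+1})$, which are the actual inputs to Burns--Graham and Christoffers. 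The paper obtains constancy by combining the boundary limit of $B_M$ (Lemma \ref{2019-09-30-lem1}), harmonicity of $\log B_M$ along analytic disks attached to the strongly pseudoconvex boundary, and real-analyticity of $B_M$. Your proposed substitute --- that pluriharmonicity of $\log u$ is ``incompatible with a nontrivial $\log r$ term'' because the logarithmic coefficients are ``universal nonzero multiples'' of CR invariants --- is asserted rather than proved, and I do not see how to run it without first pinning down the boundary constant; you should either prove that claim or insert the Diederich-limit-plus-disks argument.
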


%We will leave the proof of Theorem \ref{mthm1} in Section \ref{localization}. A direct corollary of Theorem \ref{mthm1} is
With Theorem \ref{mthm1} at our disposal  and by a similar argument
as in the \cite{NS06} and \cite{HX16}, we have the following:
% in (See \cite{Lu} and \cite{

\begin{corollary}\label{mthm2}
Let $M$ be a Stein manifold with a compact smooth stronlgy
pseudoconvex pseudoconvex boundary. If the Bergman metric on $M$ is
Kahler-Einstein, then $M$ is biholomorphic to the ball.
\end{corollary}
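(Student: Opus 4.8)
The plan is to derive the Corollary from Theorem \ref{mthm1} by showing that, in the \emph{smooth} Stein category, sphericity of the boundary upgrades to a global biholomorphism with the ball; this is the strategy of \cite{NS06} in dimension two and of \cite{HX16} in higher dimensions. First I would verify that the hypotheses of Theorem \ref{mthm1} are met with $E=\emptyset$. A Stein manifold admits a proper holomorphic embedding into some $\mathbb{C}^N$, so it carries no compact analytic subvariety of positive dimension; hence the exceptional set $E$ of the blowing-down map $\pi:M\to\Omega$ of Grauert is empty, $\pi$ is a biholomorphism, and $\Omega=M={\rm Reg}(\Omega)$. Thus the Kähler-Einstein hypothesis on the Bergman metric of $M$ coincides with the hypothesis ``Bergman metric on $M\setminus E$ is Kähler-Einstein'' of Theorem \ref{mthm1}, and that theorem yields that $\partial M$ is spherical.

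Next I would turn sphericity into a uniformization. The spherical CR structure on the compact boundary $\partial M$ carries a flat Chern--Moser connection; its developing map sends the universal cover $\widetilde{\partial M}$ into the standard sphere $\partial\mathbb{B}^n$, and its holonomy is a representation $\rho:\pi_1(\partial M)\to \mathrm{Aut}(\mathbb{B}^n)=PU(n,1)$. Since the Bergman metric is complete, Kähler-Einstein with (necessarily negative) Einstein constant, and asymptotically models the complex hyperbolic metric because the boundary is CR flat, I would continue the developing map from a collar of $\partial M$ across the Stein filling and identify the universal cover of $M$ with $\mathbb{B}^n$, realizing $\overline{M}=\overline{\mathbb{B}^n}/\Gamma$ for a group $\Gamma$ acting on $\overline{\mathbb{B}^n}$ properly discontinuously and freely.

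Finally, smoothness forces $\Gamma$ to be trivial. Proper discontinuity on the compact set $\overline{\mathbb{B}^n}$ makes $\Gamma$ finite, hence (by the Cartan fixed-point theorem for the complete negatively curved ball metric) conjugate into the maximal compact subgroup $U(n)\subset \mathrm{Aut}(\mathbb{B}^n)$ and fixing the center of $\mathbb{B}^n$; a nontrivial element would then fix an interior point, contradicting the freeness needed for $\overline{M}$ to be a manifold rather than an orbifold. Therefore $\Gamma=\{e\}$ and $M\cong\mathbb{B}^n$.

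I expect the main difficulty to be the second step: bridging the gap between the purely local statement that $\partial M$ is spherical and the global claim that $M$ is a ball quotient. Concretely, one must show that the complete Kähler-Einstein Bergman metric has constant holomorphic sectional curvature on all of $M$ (not merely asymptotically at $\partial M$), equivalently that the developing map extends holomorphically and without branching across the interior of the Stein filling. Controlling this analytic continuation, and excluding incompleteness or monodromy obstructions along the way, is the technical core supplied by \cite{NS06} and \cite{HX16} that I would adapt to the present Stein-manifold setting.
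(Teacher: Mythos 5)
Your proposal follows essentially the same route as the paper, which derives the corollary from Theorem \ref{mthm1} by observing that a Stein manifold has empty exceptional set $E$ (so the theorem gives sphericity of $\partial M$) and then invoking the uniformization arguments of \cite{NS06} and \cite{HX16} to pass from a spherical boundary to $M\cong\mathbb{B}^n/\Gamma$ with $\Gamma$ finite and acting freely, hence trivial. The technical core you defer to those references is exactly what the paper defers to them as well, so there is nothing to correct.
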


\section{Localization of Bergman kernel forms }\label{localization}
Assume now that $M$ is a complex manifold with a compact smooth
strongly pseudo-convex boundary.
 Fix $w_0\in M$. Then $K_M(z, w_0)$ is a
holomorphic $(n, 0)$-form with respect to $z$ and is
$L^2$-integrable. Let $w=(w_1, \cdots, w_n)$ be coordinates in a
neighborhood of $w_0$. We explain the meaning of $L^2$-integrable of
$K_M(z, w_0)$. Write $dw=dw_1\wedge\cdots\wedge dw_n$ and
$d\overline w=d\overline w_1\wedge\cdots d\overline w_n$. Then write
$$K_M(z, w_0)=\tilde k_M(z, w_0) \wedge d\overline w|_{w_0}.$$ Here,
$\tilde k_M(z, w_0)$ is a $(n, 0)$-form on $M$. We say $K_M(z, w_0)$
is $L^2$-integralbe with respect to $z$ if
$$(-1)^{\frac{n^2}{2}}\int_M \tilde k_M(z, w_0)\wedge
\overline{\tilde k_M(z, w_0)}<\infty.$$ The $L^2$-integrability of
$K(z, w_0)$ does not depend on the choice of coordinates $w$.

For any $p\in\partial M$, there exists a coordinate chart $(U, z)$ of $M'$ centered at $p$. Take a smooth strongly pseudocovnex domain $D\subset M\cap U$ such that
\begin{equation}\label{2019-09-30-a1}
D\cap B(p, 2\delta)=M\cap B(p, 2\delta)
\end{equation}
where $B(p, 2\delta)=\{q\in U: |z(q)|<2\delta\}$ where
$|z|=\sqrt{|z_1|^2+\cdots+|z_n|^2}$. Here, $\delta$ is sufficiently
small. We then have the following localization result for which
there is no need to assume that the Bergman metric of $M$ is
K\"ahler-Einstein.
\begin{proposition}\label{localization of Bergman kernel}
For $p\in\partial M$, let $D\subset M$ be a strongly pseudoconvex domain  satisfying   (\ref{2019-09-30-a1}). Let $k_M(z, \overline z), k_D(z, \overline z)$ be given as in (\ref{20/5/10/a1}). Then
\begin{equation}
k_M(z, \overline z)=k_D(z, \overline z)+\varphi(z),
\end{equation}
where $\varphi(z)\in C^{\infty}( B(p, \delta)\cap \overline
M)$.
\end{proposition}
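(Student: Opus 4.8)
The plan is to compare the two reproducing kernels directly and show that their difference extends smoothly across the boundary diagonal near $p$; restricting to the diagonal then yields the desired $\varphi$. Fix a cutoff $\chi\in C_c^\infty(B(p,2\delta))$ with $\chi\equiv 1$ on $B(p,\delta)$, so that $\overline\partial\chi$ is supported in the annular region $\{\delta\le |z|\le 2\delta\}$. For a fixed $w\in B(p,\delta)\cap M$, I would start from $\chi(z)K_D(z,\overline w)$, which agrees with $K_D(\cdot,\overline w)$ near $p$ but is no longer holomorphic; its obstruction to holomorphicity is the $(n,1)$-form $\alpha_w:=\overline\partial\chi\wedge K_D(\cdot,\overline w)$, supported away from both $p$ and the diagonal point $w$.

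The key point is that $\alpha_w$ is smooth up to $\overline M$. Indeed, by Kerzman's theorem the Bergman kernel $K_D(z,\overline w)$ of the strongly pseudoconvex domain $D$ is smooth on $\overline D\times\overline D$ off the boundary diagonal, and on the support of $\overline\partial\chi$ the points $z$ and $w$ stay a fixed distance apart. I would then solve $\overline\partial u_w=\alpha_w$ on $M$ by taking the canonical solution $u_w=\overline\partial^{\,*}N_M\alpha_w$ orthogonal to $A^2(M)$, where $N_M$ is the $\overline\partial$-Neumann operator on $M$. Since $\partial M$ is compact and strongly pseudoconvex, Kohn's subelliptic estimates guarantee that $N_M$ gains derivatives up to the boundary, so $u_w$ is smooth on $\overline M$ and depends smoothly on the parameter $w$. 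Setting $F_w:=\chi K_D(\cdot,\overline w)-u_w$ produces an element of $A^2(M)$ that coincides, modulo the globally smooth correction $u_w$, with $K_D(\cdot,\overline w)$ on $B(p,\delta)$.

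It then remains to relate $F_w$ to $K_M(\cdot,\overline w)$. Using that $\chi\equiv1$ on $B(p,\delta)$, that $M=D$ inside $B(p,2\delta)$, and the reproducing property on $D$, a direct computation gives $(g,F_w)_M=(P_D(\chi g))(w)$ for every $g\in A^2(M)$, where $P_D$ is the Bergman projection of $D$, while the reproducing property on $M$ gives $(g,K_M(\cdot,\overline w))_M=g(w)$. Testing the difference against $g=K_M(\cdot,\overline\zeta)$ expresses $K_M(z,\overline w)-F_w(z)$ through an integral of the two kernels against the smooth cutoff $\chi$, exhibiting it as the Schwartz kernel of a localized projection difference. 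I would show this operator is smoothing near $(p,p)$, so that $K_M(z,\overline w)-\chi(z)K_D(z,\overline w)$ extends smoothly up to the boundary for $(z,w)$ near $(p,p)$. On $B(p,\delta)$, where $\chi\equiv1$, this reads $K_M(z,\overline w)-K_D(z,\overline w)=(\text{smooth})$, and restricting to the diagonal $w=z$ gives $k_M(z,\overline z)-k_D(z,\overline z)=\varphi(z)$ with $\varphi\in C^\infty(B(p,\delta)\cap\overline M)$.

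The main obstacle is the boundary regularity built into the last two steps: although $K_M$ and $K_D$ individually blow up along the boundary diagonal, their local singular parts, which are determined entirely by the common boundary geometry in $B(p,2\delta)$, must be shown to cancel, leaving a remainder smooth up to $\partial M$. Concretely this amounts to proving that the canonical $\overline\partial$-solution $u_w$ and the localized operator $g\mapsto g-P_D(\chi g)$ are smoothing up to the boundary and depend smoothly on $w$, which is exactly where Kohn's global regularity for the $\overline\partial$-Neumann problem on a compact strongly pseudoconvex boundary, together with Kerzman's off-diagonal smoothness of the Bergman kernel, does the real work. The interior estimates and the underlying $L^2$-theory are routine by comparison.
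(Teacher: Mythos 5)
Your overall strategy is the Fefferman--Kerzman localization, as in the paper, but your decomposition is genuinely different and this is where a gap appears. The paper compares $K_M(\cdot,\overline w)$ with $\chi_D\,K_D(\cdot,\overline w)$, where $\chi_D$ is the \emph{characteristic} function of $D$: the point of that choice is that $(g,\chi_D K_D(\cdot,\overline w))_M=(g|_D,K_D(\cdot,\overline w))_D=g(w)=(g,K_M(\cdot,\overline w))_M$ for every $g\in A^2(M)$, so the difference $f_w$ is orthogonal to $A^2(M)$ and one can write $f_w=\overline\partial^{\,*}N\overline\partial f_w$ directly; the price is that $\overline\partial f_w$ is only an $H_{-1}$ current supported on $\partial D\setminus\partial M$, which the paper handles by a regularization plus the \emph{pseudolocal} estimate $\|\xi N\tilde g_w^\varepsilon\|_{s+1}\le C_s(\|\xi_1\tilde g_w^\varepsilon\|_s+\|\tilde g_w^\varepsilon\|_{-1})$ and a uniform $H_{-1}$ bound obtained from Kerzman's theorem for $D$. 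Your smooth cutoff $\chi$ makes the $\overline\partial$-datum $\alpha_w=\overline\partial\chi\wedge K_D(\cdot,\overline w)$ pleasant, but it destroys the reproducing identity, so you are forced to control the leftover term $K_M(\cdot,\overline w)-P\bigl(\chi K_D(\cdot,\overline w)\bigr)$. Pairing against $K_M(\cdot,\overline\zeta)$ as you propose gives
\begin{equation*}
\overline{K_M(\zeta,\overline w)-F_w(\zeta)}=\int_D\bigl(1-\chi(\eta)\bigr)\,K_M(\eta,\overline\zeta)\,\overline{K_D(\eta,\overline w)}\,dV(\eta),
\end{equation*}
and the integrand now contains $K_M(\eta,\overline\zeta)$ for $\eta$ ranging over all of $D$ outside the plateau of $\chi$, including points of $\partial M$ far from $p$. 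To differentiate this in $\zeta$ up to $\partial M$ you need the off-diagonal boundary smoothness of $K_M$ itself on the manifold $M$. That is not supplied by Kohn's \emph{global} regularity of $N$ (which you invoke for $u_w$), nor by Kerzman's theorem as cited, which applies to the domain $D$ sitting in a coordinate chart; it is a statement of essentially the same depth as the proposition you are proving, and its proof is precisely the pseudolocal subelliptic estimate that the paper's argument uses explicitly. As written, this step is a genuine gap: "I would show this operator is smoothing near $(p,p)$" is the whole theorem in disguise.

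Two smaller points. First, with $\chi\equiv1$ only on $B(p,\delta)$ and $w$ ranging over $B(p,\delta)\cap M$, the support of $\overline\partial\chi$ is \emph{not} at fixed distance from $w$ (take $w$ close to the sphere $|z|=\delta$), so your appeal to Kerzman for the smoothness of $\alpha_w$ needs either a larger plateau for $\chi$ or a smaller range for $w$. Second, the conclusion requires joint smoothness in $(z,w)$ up to the boundary before restricting to the diagonal; the paper gets this by repeating the whole estimate for $D_w^\alpha\tilde f_w$ with constants uniform in $w$ (its Step 2), and your sketch asserts smooth dependence on $w$ without indicating the corresponding uniform bounds. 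Both are fixable, but the off-diagonal regularity of $K_M$ is the substantive missing ingredient; if you grant it (or prove it via the pseudolocal estimates), your variant does go through and is arguably cleaner in that it avoids the $H_{-1}$ regularization.
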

\begin{proof}
We use the  Fefferman \cite{Fe74} localization method
     developed in the domain case.  For clarity,  we proceed  in  two steps.

{\bf Step 1.} Let $(U, w)$ be a coordinate chart centered at $p$
where $w=(w_1, \cdots, w_n)$ are holomorphic coordinates. Write
$d\overline w|_w=d\overline w_1\wedge\cdots\wedge d\overline w_n|_w,
\forall w\in U$.
     We fix $w\in B(p, r)\cap M$ and set
    $$f_w(z)=K_M(z, \overline w)-K_D(z, \overline w)\chi_D(z), z\in M,$$
    where $\chi_D$ is the characteristic function of $D$.
    Write $f_w(z)=\tilde f_w(z)\wedge d\overline w|_w$ and $\tilde g_w(z)=\overline\partial
    \tilde f_w$ where $\tilde f_w(z)$ is a $L^2$-integrable $(n, 0)$-form on $M$,
     $\tilde f_w\perp A^2(M)$ and $\tilde g_w$ is a $(n, 1)$-form in $H_{-1}(M)$ with
     ${\rm supp}~\tilde g_w\subset \partial D\setminus\partial M$.
     By the smoothing property,
      there is a sequence of $(n, 0)$-form $\{\tilde f_w^\varepsilon\}$ on $M$ which
      are smooth up to $\overline M$ such that
$\tilde f_w^\varepsilon\rightarrow \tilde f_w~\text{in the
}~L^2~\text{space}.$ Set $\tilde g_w^\varepsilon=\overline\partial
\tilde f_w^\varepsilon$.
 Since ${\rm supp}~\tilde g_w\subset \overline{ \partial D\setminus \partial M}$,
 we can assume that
  ${\rm supp}~\tilde g_w^\varepsilon$ is contained in a $\varepsilon$-neighborhood of $\partial D\setminus \partial M$. Moreover,

\begin{equation}\label{10-3-a1}
\tilde f_w^\varepsilon\rightarrow \tilde f_w~\text{in}~L_{(n, 0)}^2(M), \tilde g_w^\varepsilon\rightarrow \tilde g_w~\text{in}~H_{-1}(M).
\end{equation}

Fix a Hermitian metric $g$ on $M'$.  For $0\leq q\leq n$, let $L^2_{(n, q)}(M)$ be
 the space of $L^2$-integrable $(n, q)$-forms with respect to $g$. When $q=0$,
 this definition of $L^2_{(n, 0)}(M)$ is the same as defined  in Section \ref{4-19-a4}.
  We denote by $N^{(q)}$  the $\overline\partial$-Neumann operator with respect
   to $\Box^{(q)}$.  For convenience, we denote $N^{(q)}$  by $N$ when it dose not cause any confusing. Since $M$ is strongly pseudoconvex, then by the local regularity of $N$ \cite{FK72} we have
\begin{equation}\label{4-18-a1}
\|\xi N \tilde g_w^\varepsilon\|_{s+1}\leq C_s( \|\xi_1 \tilde g_w^\varepsilon\|_s+\|\tilde g_w^\varepsilon\|_{-1}), \forall s\geq 0,
\end{equation}
with $\{C_s\}$ constants independent of $w$.
Here, $\xi(z), \xi_1(z)\in C_0^\infty(B(p, \frac{3}{2}\delta))$ and $\xi_1|_{{\rm supp}\xi}\equiv1$, $\xi|_{B(p, \delta)}\equiv1$. Since $B(p, 2\delta)\cap \partial D\setminus \partial M= \emptyset$, then $\xi_1 \tilde g_w^\varepsilon \equiv0$ when $\varepsilon$ is sufficiently small. Thus,
\begin{equation}\label{10-3-a2}
\|\xi N \tilde g_w^\varepsilon\|_{s+1}\leq C_s\|\tilde g_w^\varepsilon\|_{-1}.
\end{equation}
By (\ref{10-3-a1}) and (\ref{10-3-a2}), $\{\xi N\tilde
g_w^\varepsilon\}$ is a  Cauchy  sequence in $H_{s+1}(M)$ for any
$s\geq 0$. Assume that $\xi N \tilde g_w^\varepsilon\rightarrow h$
in $H_s(M)$ for any $s\geq 0$. Then $h\in C^\infty(\overline M)$. On
the other hand, $\tilde f_w^\varepsilon-P \tilde
f_w^\varepsilon=\overline\partial^\ast N \tilde g_w^\varepsilon$
where $P: L^2_{(n, 0)}(M)\rightarrow A^2(M)$ is the Bergman
projection. Then
\begin{equation}\label{10-3-a5}
\xi(\tilde f_w^\varepsilon-P\tilde f_w^\varepsilon)=\xi\overline\partial^\ast N \tilde g_w^\varepsilon=\overline\partial^\ast (\xi N \tilde g_w^\varepsilon)-[\xi, \overline\partial^\ast] (\xi_1N\tilde g_w^\varepsilon).
\end{equation}

By (\ref{10-3-a2}), we have
\begin{equation}\label{10-3-a4}
\|\xi(\tilde f_w^\varepsilon-P \tilde f_w^\varepsilon)\|_s\leq
C_s\|\tilde g_w^\varepsilon\|_{-1}.
\end{equation}

We claim that $\{\|\tilde g_w\|_{-1}\}$ has uniform bound with respect
 to $w\in B(p, \delta)\cap M$. We next give a proof of this Claim as
 follows:

  Choose a real function $\rho\in C^\infty(M')$ such
  that $\rho\equiv 1$ in a $2\sigma$-neighborhood
    of $\partial D\setminus \partial M$ denoted by $V_{\sigma}$ in $M'$. Write $K_D(z, w)=\tilde K_D(z, w)\wedge d\overline w|_w$ for all $w\in M\cap B(p, \delta)$. Since ${\rm  supp}~\tilde g_w\subset\partial D\setminus\partial M$, then $\forall \varphi=\sum_{j=1}^n\varphi_j dz_1\wedge\cdots\wedge dz_n\wedge d\overline z_j\in \Omega^{(n, 1)}_c(M)$ we have $(\tilde g_w,  \varphi)=(\tilde g_w, \rho\varphi)$ and
\begin{equation}\label{10-4-a1}
\begin{split}
(\tilde g_w, \rho\varphi)&=(\overline\partial \tilde f_w, \rho\varphi)=(\overline\partial (\tilde K_D(z, \overline w)\chi_D(z)), \rho\varphi)\\
&=(\tilde K_D(z, \overline w)\chi_D(z), \overline\partial^\ast(\rho\varphi))=\int_D \tilde K_D(z, \overline w)\wedge \overline{\overline\partial^\ast (\rho\varphi)}\\
&=\int_{V_{2\sigma}} k_D(z, w)dz_1\wedge\cdots\wedge dz_n\wedge\overline{\overline\partial^\ast(\rho\varphi)},
\end{split}
\end{equation}
where $\tilde K_D(z, w)=k_D(z, w)dz_1\wedge\cdots\wedge dz_n$. Since $d(V_{2\sigma}, B(p, \delta))>0$ when $\sigma, \delta$ are sufficinetly small then by a result of Kerzman \cite[Theorem 2]{Ke72} we have
\begin{equation}\label{10-6-a1}
\sup_{z\in V_{\sigma}}|k_D(z, w)|\leq C, \forall w\in M\cap B(p, \delta)
\end{equation}
where $C$ is a constant independent of $w$.
Then from (\ref{10-4-a1}) and (\ref{10-6-a1}) we have
\begin{equation}
|(g_w, \varphi)|\leq C_1\|\varphi\|_1, \forall w\in B(p, \delta)\cap M,
\end{equation}
where the constant $C_1$ does not depend on $w\in B(p, \delta)\cap M$. Thus, we get the conclusion of the Claim.

On the other hand, $P\tilde f_w^\varepsilon\rightarrow 0$ in
$L^2(M)$ because $\tilde f_w\perp A^2(M)$. By (\ref{10-3-a5}) and
the Rellich lemma, we have $\xi(\tilde f_w^\varepsilon-P\tilde
f_w^\varepsilon)\rightarrow h_s$ in $H_s(M)\  \forall s\geq 0$ for a
certain $h_s$. Then by (\ref{10-3-a1}) we have $h_s=\xi\tilde f_w$.
Thus, from the above Claim and by taking the limit in
(\ref{10-3-a4}), we have
\begin{equation}
\|\xi \tilde f_w\|_{s}\leq \tilde C_s.
\end{equation}
Here, the constant $\tilde C_s$ does not depend on $w\in B(p, r)\cap
M$.

\medskip
{\bf Step 2.} Write $f_w(z)=\tilde f_w(z)dw|_w$ and
$\tilde g_w=\overline\partial\tilde  f_w$.
Then $D_w^\alpha \tilde g_w=\overline\partial D_w^\alpha\tilde f_w$
for any multi-index $\alpha=(\alpha_1, \ldots, \alpha_n)$. Here, $\overline\partial$ is
 defined with respect to the $z$-direction.
 We still have $D_w^\alpha \tilde f_w\perp A^2(M)$ for any $w\in M\cap B(p, \delta)$.
 Then by a similar argument in { Step 1}, we have
\begin{equation}\label{10-3-a6}
\|\xi D_w^\alpha \tilde f_w\|_s\leq \tilde C_s.
\end{equation}
Here, constants $\tilde C_s$ do not depend on $w\in M\cap B(p,
\delta)$. Then by Sobolev embedding theorem, we have that
\begin{equation}\label{10-3-a7}
|\xi D_z^\alpha D_w^\beta\tilde f_w(z)|\leq C_{\alpha, \beta}, \forall \alpha, \beta, \forall z\in M, w\in M\cap B(p, \delta),
\end{equation}
where $C_{\alpha, \beta}$ are constants. Since $\xi|_{B(p, \delta)}\equiv1$,
 thus (\ref{10-3-a7}) implies that $\tilde f_w(z)$ is smooth up to
 $B(p, \delta)\cap \overline M\times B(p, \delta)\cap \overline M$. Thus,
 we get the conclusion of the proposition  if we take $z=w \in B(p, \delta)\cap \overline M$.
\end{proof}

Let $B_M(z)=G_M(z)/k_M(z, z)$. Then $B_M(z)$ is a globally-defined
smooth function on $M$ although $G_M(z)$ and $k_M(z, z)$ are only
locally given. The following lemma is a generalization of a result
of Diederich  \cite[Theorem 2]{Di70}:

\begin{lemma}\label{2019-09-30-lem1}
    $B_M(z)\rightarrow \frac{(n+1)^n\pi^n}{n!}$ as $z\rightarrow \partial M$.
\end{lemma}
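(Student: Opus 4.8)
The plan is to reduce the boundary behaviour of $B_M$ to that of the local model domain $D$ and then invoke Diederich's theorem. Fix $p\in\partial M$ and let $D$ be the strongly pseudoconvex domain furnished by Proposition \ref{localization of Bergman kernel}, so that on $B(p,\delta)\cap\overline M$ we have $k_M(z,\overline z)=k_D(z,\overline z)+\varphi(z)$ with $\varphi\in C^\infty(B(p,\delta)\cap\overline M)$. Since $B_M$ is globally defined and the assertion is local at $\partial M$, it suffices to prove $B_M(z)\to\frac{(n+1)^n\pi^n}{n!}$ as $z\to p$ inside $B(p,\delta)\cap M$. Because $D\cap B(p,2\delta)=M\cap B(p,2\delta)$, the point $p$ lies on $\partial D$ and the two boundaries agree near $p$, so \cite[Theorem~2]{Di70} applies to $D$ and yields $B_D(z)\to\frac{(n+1)^n\pi^n}{n!}$. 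The entire task is therefore to show that the smooth correction $\varphi$ is invisible in the limit, i.e. $B_M(z)-B_D(z)\to 0$.

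For this I would use the sharp boundary asymptotics of the Bergman kernel and metric of a strongly pseudoconvex domain (Fefferman \cite{Fe74}). Let $\rho<0$ be a smooth defining function of $D$ near $p$. Then $k_D(z,\overline z)\asymp(-\rho(z))^{-(n+1)}$, so $\varphi/k_D\asymp\varphi\,(-\rho)^{n+1}\to 0$, and from $\log k_M=\log k_D+\log\!\big(1+\varphi/k_D\big)$ we obtain $g^M_{\alpha\overline\beta}=g^D_{\alpha\overline\beta}+h_{\alpha\overline\beta}$, where $h_{\alpha\overline\beta}=\partial_\alpha\overline\partial_\beta\log(1+\varphi/k_D)$. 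Since $\varphi/k_D$ vanishes to order $n+1$ in $(-\rho)$ and each differentiation lowers the vanishing order by at most one (the worst case being the complex normal direction, which contributes a factor $\rho^{-1}$), one gets $h_{\alpha\overline\beta}=O\big((-\rho)^{n-1}\big)$, and in particular $h$ stays bounded as $z\to p$.

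Next I would compare $h$ with the degenerating metric $g^D$. The Bergman metric of $D$ is complete and grows like $(-\rho)^{-2}$ in the complex normal direction and like $(-\rho)^{-1}$ tangentially, so its inverse $(g^D)^{\overline\beta\alpha}$ has entries of size $O((-\rho)^2)$ in the normal slot and $O((-\rho))$ otherwise. Combined with $h_{\alpha\overline\beta}=O((-\rho)^{n-1})$ this gives $(g^D)^{\overline\beta\alpha}h_{\alpha\overline\gamma}=o(1)$ uniformly as $z\to p$, whence
$$\frac{G_M}{G_D}=\det\!\big(I+(g^D)^{-1}h\big)\to 1,\qquad \frac{k_M}{k_D}=1+\frac{\varphi}{k_D}\to 1.$$
Therefore $B_M=\dfrac{G_M}{k_M}=\dfrac{G_D}{k_D}\big(1+o(1)\big)=B_D\big(1+o(1)\big)$, and the lemma follows from the limit for $B_D$.

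The hard part is the uniform control near $\partial M$ of the perturbed determinant, namely making rigorous the statement that $h$ is negligible relative to the blowing-up metric $g^D$; this depends only on the classical Fefferman--H\"ormander boundary expansions for $k_D$ and $g^D$ on the strongly pseudoconvex domain $D$, which I would cite rather than reprove. The genuinely new ingredient is the localization of Proposition \ref{localization of Bergman kernel}, which is what permits replacing $M$ by $D$ up to a smooth, hence negligible, error.
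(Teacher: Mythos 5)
Your proof is correct and follows essentially the same route as the paper's: localize via Proposition \ref{localization of Bergman kernel}, use Fefferman's expansion of $k_D$ to show the correction term $\partial_\alpha\overline\partial_\beta\log(1+\varphi/k_D)$ is negligible so that $G_M/G_D\to 1$ and $k_M/k_D\to 1$, and then invoke Diederich's theorem on $D$. If anything, you are slightly more careful than the paper at the determinant step, where you justify $\det\big(I+(g^D)^{-1}h\big)\to 1$ by comparing the bounded perturbation $h=O\big((-\rho)^{n-1}\big)$ against the blow-up of $g^D$, a point the paper passes over with ``as a consequence.''
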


\begin{proof}
By Lemma \ref{localization of Bergman kernel}, for any $p\in\partial M$ there exists a strongly pseudocovnex domain $D\subset M$ which satisfies \ref{2019-09-30-a1} such that
\begin{equation}\label{2019-09-30-a2}
k_M(z, \overline z)=k_D(z, \overline z)+\varphi(z)
\end{equation}
where $\varphi(z)\in C^\infty(B(p, \delta)\cap\overline M)$.
Then
\begin{equation}
\log {k_M(z, \overline z)}=\log k_D(z, \overline z)+\log \left(1+\frac{\varphi(z)}{k_D(z, \overline z)}\right), z\in D\cap B(p, \delta).
\end{equation}
Thus,
\begin{equation}\label{2019-09-30-a3}
g^M_{\alpha\overline\beta}=g^D_{\alpha\overline\beta}+\frac{\partial^2}{\partial z_\alpha\partial \overline z_\beta}\log \left(1+\frac{\varphi(z)}{k_D(z, \overline z)}\right).
\end{equation}
Since $D$ can be seen as a strongly pseudoconvex domain in $\mathbb C^n$ with
 smooth boundary, then by Fefferman's asymptotic expansion of Bergman kernels, we have
\begin{equation}\label{Bergman asymptotic expansion}
k_D(z, \overline z)=\frac{\Phi(z)}{r^{n+1}(z)}+\Psi(z)\log {r(z)}, z\in D.
\end{equation}
where $r$ is a Fefferman defining function for $D$ and $\Phi, \Psi\in C^\infty(\overline D)$ and $\Phi(z)\neq 0$ for all $z\in\partial D$.  Then
\begin{equation}\label{4-26-a1}
\log{(1+\frac{\varphi}{k_D(z, \overline z)})}=\log{\left(1+\frac{\varphi(z)r^{n+1}}{\Phi+\Psi r^{n+1}\log r}\right)}=\log{(1+f r^{n+1})}
\end{equation}
where $f=\frac{\varphi(z)}{\Phi+\Psi r^{n+1}\log r}$. Since $n\geq 2$ and $\Phi|_{\partial D}\neq 0$, we have $f\in C^2(B(p, \delta)\cap \overline M)$. By Taylor's expansion,
\begin{equation}\label{4-26-a2}
\log (1+f r^{n+1})=f r^{n+1}+ O(f^2 r^{2(n+1)})~\text{as}~ r\rightarrow 0.
\end{equation}
Thus, $[\log (1+f r^{n+1})]_{\alpha\overline\beta}\rightarrow 0$ as $z\rightarrow B(p, \delta)\cap \partial M$ for  $n\geq 2$.
Then combining (\ref{4-26-a1}) and (\ref{4-26-a2}), one has
\begin{equation*}
\frac{\partial^2}{\partial z_\alpha\partial \overline z_\beta}\log \left(1+\frac{\varphi(z)}{k_D(z, \overline z)}\right)\rightarrow 0.
\end{equation*}
As a consequence,
\begin{equation}\label{2019-09-30-a5}
\frac{G_M(z)}{G_D(z)}\rightarrow 1
\end{equation}
as $z\rightarrow\partial M\cap B(p, \delta)$.
From (\ref{2019-09-30-a2}) we have
\begin{equation}\label{2019-09-30-a4}
\frac{k_M(z, \overline z)}{G_M(z)}=\frac{k_D(z, \overline z)}{G_M(z)}+\frac{\varphi(z)}{G_M(z)}.
\end{equation}
Combining (\ref{2019-09-30-a5}) and (\ref{2019-09-30-a4}) we have
\begin{equation}\label{2019-09-30-a6}
\left|\frac{k_M(z, \overline z)}{G_M(z)}-\frac{k_D(z, \overline z)}{G_D(z)}\right|\rightarrow 0
\end{equation}
as $z\rightarrow \partial M\cap B(p, \delta)$. By \cite[Theorem
2]{Di70},  we have
\begin{equation}\label{2019-09-30-a7}
\frac{G_D(z)}{k_D(z, \overline z)}\rightarrow \frac{(n+1)^n\pi^n}{n!}
\end{equation}
 as $z\rightarrow \partial D$.  Substituting (\ref{2019-09-30-a7}) into (\ref{2019-09-30-a6})
  we conclude  the proof of the lemma.
\end{proof}

The following proposition is a  generalization of a result of Fu-Wong \cite[Proposition 1.1]{FW97} which gives a characterization when the Bergman metric on $M\setminus E$ is K\"ahler-Einstein.
 \begin{proposition}\label{2019-09-30-p1}
    Let $M$ be a relatively compact strongly pseudoconvex
    complex manifold with smooth boundary. The Bergman metric on $M\setminus E$ is Kahler-Einstein
     if and only if $B_M(z)=\frac{(n+1)^n\pi^n}{n!}$ for all $z\in M\sm E$.
\end{proposition}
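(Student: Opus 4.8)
The plan is to reduce the Einstein condition to a single scalar equation for $B_M$ via a determinant identity, and then to combine the boundary asymptotics of Lemma \ref{2019-09-30-lem1} with a maximum principle. First I would record the pointwise identity. Since $B_M=G_M/k_M$ is a globally defined function, applying $\frac{\partial^2}{\partial z_\alpha\partial\overline z_\beta}\log(\cdot)$ to numerator and denominator gives, on $M\setminus E$,
$$\frac{\partial^2\log B_M}{\partial z_\alpha\partial\overline z_\beta}=\frac{\partial^2\log G_M}{\partial z_\alpha\partial\overline z_\beta}-\frac{\partial^2\log k_M}{\partial z_\alpha\partial\overline z_\beta}=-R^M_{\alpha\overline\beta}-g^M_{\alpha\overline\beta}.$$
Hence the condition $R^M_{\alpha\overline\beta}=c\,g^M_{\alpha\overline\beta}$ is equivalent to $\frac{\partial^2\log B_M}{\partial z_\alpha\partial\overline z_\beta}=-(c+1)g^M_{\alpha\overline\beta}$, i.e. to $i\partial\overline\partial\log B_M=-(c+1)\,\omega^B_M$ on $M\setminus E$. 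This already settles the easy implication: if $B_M\equiv\frac{(n+1)^n\pi^n}{n!}$ is constant then the left-hand side vanishes and, since $g^M_{\alpha\overline\beta}$ is positive definite, $c=-1$; thus the metric is K\"ahler--Einstein. (The particular value of the constant plays no role in this direction.)

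For the converse, assume the metric is K\"ahler--Einstein with constant $c$. Tracing the equivalent identity against $g^M$ yields the scalar equation $\Delta_g\log B_M=-(c+1)n$ on $M\setminus E$, where $\Delta_g=g^{\alpha\overline\beta}\frac{\partial^2}{\partial z_\alpha\partial\overline z_\beta}$. By Lemma \ref{2019-09-30-lem1}, $B_M\to\frac{(n+1)^n\pi^n}{n!}$ as $z\to\partial M$, while $B_M$ is a smooth positive function on all of $M$ and hence bounded on the compact $\overline M$. I would first try to show $c=-1$ by locating the extrema of $B_M$ on $\overline M$: at an interior extremum of $\log B_M$ lying in $M\setminus E$ the complex Hessian is semidefinite of the appropriate sign, so $\Delta_g\log B_M$ has the corresponding sign there; since $B_M\equiv\frac{(n+1)^n\pi^n}{n!}$ on $\partial M$, if $B_M$ is nonconstant then its maximum and minimum over $\overline M$ force $c+1\ge0$ and $c+1\le0$ simultaneously, giving $c=-1$. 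Once $c=-1$, the identity becomes $i\partial\overline\partial\log B_M=0$, so $\log B_M$ is pluriharmonic on $M\setminus E$ with boundary value $\log\frac{(n+1)^n\pi^n}{n!}$, and the maximum principle yields $B_M\equiv\frac{(n+1)^n\pi^n}{n!}$.

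The main obstacle is the exceptional set $E$: the Bergman metric is positive definite only on $M\setminus E$, so the equation $\Delta_g\log B_M=-(c+1)n$ degenerates along $E$, and an interior extremum of $B_M$ could a priori be hidden on $E$, where the naive Hessian argument fails. The way I would handle this is to pass to the pluripotential form $i\partial\overline\partial\log B_M=-(c+1)\,\omega^B_M$: according to the sign of $c+1$, either $\log B_M$ or $-\log B_M$ is plurisubharmonic on $M\setminus E$ and locally bounded (using the global smoothness of $B_M$ on $M$), so the removable-singularity theorem for bounded plurisubharmonic functions across the pluripolar set $E$ (real codimension at least two) extends it to a plurisubharmonic function on $M$, after which the global maximum principle applies on $\overline M$. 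Alternatively, if the Bergman metric is complete on $M\setminus E$ toward both $\partial M$ and $E$, one may apply the Omori--Yau maximum principle (legitimate because $\mathrm{Ric}=c\,g$ is bounded below) to $\pm\log B_M$, forcing the constant $-(c+1)n$ to be simultaneously $\le0$ and $\ge0$, hence $c=-1$, and then conclude as above.

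The delicate point inside this last step, and where I expect the real work to be, is controlling $\log B_M$ from below near $E$ — equivalently, ruling out that $B_M$ degenerates to $0$ along $E$, which is exactly what would allow a nonconstant pluriharmonic $\log B_M$ with a Green-type singularity on $E$ and would defeat the naive maximum-principle conclusion. Establishing that $B_M$ stays bounded away from $0$ (or that $E$ is genuinely removable for the relevant potential) is the heart of adapting the Fu--Wong argument from the smooth domain case to the present setting with singularities.
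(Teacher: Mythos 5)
Your opening identity $i\partial\overline\partial\log B_M=-(R^M+\omega^B_M)$ and the easy direction are exactly right, and you have correctly located the crux of the matter. But the proposal does not close the gap it identifies: everything in your converse direction (the trace/extremum argument for $c=-1$, the global maximum principle for the pluriharmonic $\log B_M$) requires either a lower bound for $B_M$ near $E$ or completeness of $\omega^B_M$ toward $E$, and neither is available. Smoothness of $B_M$ on $M$ gives only an upper bound; since $\omega^B_M$ is positive definite only on $M\setminus E$, the determinant $G_M$ --- hence $B_M$ --- may well vanish along $E$, so $\log B_M$ need not be locally bounded there and the removable-singularity theorem for bounded plurisubharmonic functions does not apply. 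The Omori--Yau alternative likewise presupposes completeness near $E$, which is not established. There is also a secondary soft spot in your derivation of $c=-1$: if $B_M$ is nonconstant with constant boundary value, it need not have both a strict interior maximum and a strict interior minimum relative to that value, so your extremum argument yields only one of the two inequalities $c+1\ge 0$, $c+1\le 0$ (and the relevant extremum could in any case sit on $E$).

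The paper circumvents both difficulties by working only near $\partial M$, where $E$ plays no role. First, $c=-1$ is obtained not from an interior extremum but from boundary asymptotics: Proposition \ref{localization of Bergman kernel} together with Fefferman's expansion shows that $R^M_{\alpha\overline\beta}+g^M_{\alpha\overline\beta}\to 0$ as a tensor measured against $\omega^B_M$ as $z\to\partial M$, which combined with $R^M=cg^M$ forces $c=-1$. Then, with $\log B_M$ pluriharmonic on $M\setminus E$, the paper restricts it to analytic disks $\phi:\Delta\to M\setminus E$ attached to the strongly pseudoconvex boundary ($\phi(\partial\Delta)\subset\partial M$); on each such disk $\log B_M\circ\phi$ is harmonic with constant boundary value $\log\frac{(n+1)^n\pi^n}{n!}$ by Lemma \ref{2019-09-30-lem1}, hence constant. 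Such disks fill an open subset of $M\setminus E$, and real analyticity of the pluriharmonic function $\log B_M$ propagates the constancy to all of $M\setminus E$. If you want to salvage your maximum-principle scheme you would have to prove $B_M$ is bounded away from $0$ near $E$ (or that $\omega^B_M$ is complete toward $E$), which is precisely the hard analysis your argument defers; the disk-plus-analytic-continuation route avoids it entirely.
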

\begin{proof}
If the Bergman metric on $M\setminus E$ is K\"ahler-Einstein, then $R^M_{i\overline j}=cg^M_{i\overline j}$
  where $c$ is a  constant. By Lemma
  \ref{localization of Bergman kernel} and a direct calculation one has that $R^M_{i\overline j}+g^M_{i\overline j}$ goes to zero as a tensor with respect to $\omega^B_M$ when $z\rightarrow \partial M$.  Thus, combining the K\"ahler-Einstein assumption one has $c=-1$ and  this implies
      that $\log B_M(z)$ is a pluriharmonic function on $M\sm E$. Now,
      for any holomorphic disk $\phi: \Delta\ra M\sm E$ with $\phi$ is
      holomorphic in $\Delta:=\{t\in {\mathbb C}: |t|<1\}$,
      smooth continuous up to $\overline {\Delta}$ and $\phi(\partial\Delta)\subset\partial M$, we have $\log
      B_M(\phi(t))$ is harmonic. Since it takes the constant value on the
      boundary by Lemma \ref{2019-09-30-lem1}, it takes a constant value $\log\frac{(n+1)^n\pi^n}{n!}$ over $\Delta$.
      Now, since
    $\partial M$ is strongly pseudoconvex, the union of such disks
    fills up an open subset of $M\sm E$. Since $\log
      B_M$ is real analytic, we conclude that $B_M\equiv
      \log\frac{(n+1)^n\pi^n}{n!}$ over $M\sm E$.
If $\log B_M(z)$ takes constant value, then the Bergman metric is
obviously K\"ahler-Einstien.
\end{proof}

Let $D=\{r>0\}$ be a strongly pseudoconvex domain given in (\ref{localization of Bergman kernel}) where $r$ is a defining for $D$. Then $k_D$ has following expansion
\begin{equation}\label{Bergman kernel asymptotic expansion}
k_D(z, \overline z)=\frac{\Phi(z)}{r^{n+1}(z)}+\Psi(z)\log {r(z)}, z\in D
\end{equation}
with $\Phi, \Psi\in C^\infty(\overline D)$.
Then from Proposition \ref{2019-09-30-p1}  we have the following
\begin{lemma}\label{2019-09-30-a11}
Let $M$ be a relatively compact strongly pseudoconvex complex manifold with smooth boundary. Assume the Bergman metric on $M\setminus E$ is Kahler-Einstein. Then
\begin{equation}
\Psi(z)=O(r^k)~\text{on}~ D\cap B(p, \delta)
\end{equation}
for any $k>0$.
\end{lemma}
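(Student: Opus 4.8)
The plan is to convert the Kähler--Einstein hypothesis into an \emph{exact} complex Monge--Ampère identity for $k_M$ and then compare the two sides coefficient by coefficient in the Fefferman expansion, exploiting the fact that a Bergman kernel carries the logarithm only to the first power. By Proposition \ref{2019-09-30-p1} the hypothesis is equivalent to $B_M\equiv \frac{(n+1)^n\pi^n}{n!}=:c_n$ on $M\setminus E$, which unwinds (via $B_M=G_M/k_M$) to the pointwise identity
\[
\det\!\big(\partial_\alpha\partial_{\overline\beta}\log k_M\big)=c_n\,k_M \qquad\text{on } D\cap B(p,\delta).
\]
This is strictly stronger than Lemma \ref{2019-09-30-lem1}, and it is the only place the hypothesis enters. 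By Proposition \ref{localization of Bergman kernel} I write $k_M=k_D+\varphi$ with $\varphi\in C^\infty(B(p,\delta)\cap\overline M)$; since $\varphi$ carries no logarithm, $k_M$ inherits the Fefferman form $k_M=\widetilde\Phi\,r^{-(n+1)}+\Psi\log r$ with $\widetilde\Phi=\Phi+\varphi r^{n+1}\in C^\infty$ nonvanishing at $\partial D$ and with the \emph{same} log coefficient $\Psi$.

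The key structural remark is that the right side $c_nk_M$ contains \emph{no} term in $(\log r)^2$, whereas the left side a priori does. Writing $\log k_M=-(n+1)\log r+\log\widetilde\Phi+\log(1+\sigma\log r)$ with $\sigma:=\Psi\,\widetilde\Phi^{-1}r^{n+1}$, the expansion $\log(1+\sigma\log r)=\sigma\log r-\tfrac12\sigma^2(\log r)^2+\cdots$ shows that $g=\partial\overline\partial\log k_M$ splits as $g=G_0+G_1\log r+G_2(\log r)^2+\cdots$ with $G_1=\partial\overline\partial\sigma+\cdots$ linear in $\sigma$ and $G_2=-\tfrac12\partial\overline\partial\sigma^2+\cdots$ quadratic in $\sigma$. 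Hence the $(\log r)^2$ coefficient of $\det g$ is \emph{purely quadratic} in $\sigma$ (it arises only from one factor $G_2$ or from a product $G_1\cdot G_1$ in the multilinear determinant), and at leading order equals
\[
\det(G_0)\Big[-\sigma\,\Delta_0\sigma-|\partial\sigma|^2_{G_0}+\tfrac12(\Delta_0\sigma)^2-\tfrac12|\partial\overline\partial\sigma|^2_{G_0}\Big],
\]
where $\Delta_0$ is the complex Laplacian of $G_0$, $|\partial\sigma|^2_{G_0}=G_0^{\overline\beta\alpha}\sigma_\alpha\sigma_{\overline\beta}$, and $|\partial\overline\partial\sigma|^2_{G_0}$ is the Hilbert--Schmidt norm of the Hessian. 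Matching $(\log r)^2$ coefficients forces this quadratic expression to vanish. I emphasize that matching the $(\log r)^1$ terms alone is \emph{insufficient}: $\lambda=n+1$ is a resonant indicial root of the relevant boundary-degenerate operator, so the linear log term is not directly obstructed; it is the quadratic $(\log r)^2$ relation that does the work.

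I would then induct on the order of vanishing of $\Psi$. Near $\partial M$ the metric $G_0$ is asymptotic to the Bergman metric of the ball, for which $\Delta_0 r^{\lambda}=\frac{\lambda(\lambda-n)}{n+1}r^{\lambda}+O(r^{\lambda+1})$ and $|\partial r|^2_{G_0}=\frac{r^2}{n+1}+o(r^2)$. Inserting the lowest surviving term $\sigma\sim\Psi_m r^{\,n+1+m}$ into the quadratic expression and collecting the power $r^{2(n+1+m)}$ gives $C_m\Psi_m^2=0$ with
\[
C_m=\tfrac12\mu^2-\mu-\tfrac{\lambda^2}{n+1}-\tfrac{\lambda^2(\lambda-1)^2}{2(n+1)^2}
=\tfrac{\lambda^2(1-n)(n+1+2m)}{2(n+1)^2}-\mu-\tfrac{\lambda^2}{n+1},
\]
where $\lambda=n+1+m$ and $\mu=\frac{\lambda(\lambda-n)}{n+1}$. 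For $n\ge2$ and $m\ge0$ each of the three terms is strictly negative, so $C_m<0$; hence $\Psi_m=0$, and by induction every boundary Taylor coefficient of $\Psi$ vanishes, i.e.\ $\Psi=O(r^k)$ for all $k$.

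The main obstacle is the analytic bookkeeping behind the second and third steps: justifying that both sides admit a polyhomogeneous expansion in the scale $\{r^j,\,r^j(\log r)^\ell\}$ so that coefficients of distinct $r^j(\log r)^\ell$ may be matched legitimately, and carrying out the second-variation expansion of the determinant carefully enough to isolate the quadratic form and confirm the universality of its leading constant $C_m$. Once $C_m\neq0$ is established, the induction is routine; the conceptual content lies entirely in the observation that the absence of $(\log r)^2$ terms in a Bergman kernel, combined with the exact Einstein equation, over-determines $\Psi$ and forces its vanishing to infinite order.
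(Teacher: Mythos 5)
Your reduction to the exact identity $\det(\partial_\alpha\partial_{\overline\beta}\log k_M)=c_nk_M$ via Proposition \ref{2019-09-30-p1} is the right starting point, and the structural observation that the $(\log r)^2$ coefficient of the left side is purely quadratic in $\sigma=\Psi\widetilde\Phi^{-1}r^{n+1}$ while the right side has none is sound. But this is a genuinely different, and much more fragile, route than the paper's, and the step you yourself flag as the ``main obstacle'' is precisely where your computation goes wrong. In $\tfrac12\operatorname{tr}\bigl((G_0^{-1}\partial\overline\partial\sigma)^2\bigr)$ you keep only the rank-one piece $\lambda(\lambda-1)ar^{\lambda-2}\,\partial r\,\overline\partial r$ of the complex Hessian of $\sigma=ar^{\lambda}$; the piece $\lambda ar^{\lambda-1}\partial\overline\partial r$ contributes at the \emph{same} order $r^{2\lambda}$, because $G_0^{-1}\partial\overline\partial r=-\tfrac{r}{n+1}I+\tfrac1r G_0^{-1}(\partial r\,\overline\partial r)+O(r^2)$. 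Carrying both pieces gives $\operatorname{tr}\bigl((G_0^{-1}\partial\overline\partial\sigma)^2\bigr)=\tfrac{\lambda^2[(\lambda-1)^2+n-1]}{(n+1)^2}a^2r^{2\lambda}+\cdots$, not $\tfrac{\lambda^2(\lambda-1)^2}{(n+1)^2}a^2r^{2\lambda}$. The omitted term enters $C_m$ with a minus sign, so your conclusion $C_m<0$ happens to survive, but the displayed formula is wrong, and nothing in the write-up certifies that no further same-order contributions have been dropped; since the entire induction hinges on $C_m\neq0$, this is a genuine gap as the argument stands.

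The paper avoids all of this. It rewrites $B_M\equiv C_n$ as the identity $J(k_M)=(-1)^nC_nk_M^{n+2}$ of \cite{FW97} for the bordered Hessian $J$, substitutes $k_M=\widetilde\Phi\,r^{-(n+1)}+\Psi\log r$, and compares the coefficient of the \emph{top} log power: every entry of the $(n+1)\times(n+1)$ matrix defining $J(k_M)$ has the form $P+Q\log r$ with $P,Q$ finite sums of smooth functions times powers of $r$, so $J(k_M)$ has log-degree at most $n+1$, whereas the right side contains the term $(-1)^nC_n\Psi^{n+2}(\log r)^{n+2}$. Uniqueness of the asymptotic expansion in the scale $r^a(\log r)^b$ then forces $\Psi^{n+2}=O(r^{\infty})$ in one stroke --- no indicial constants, no resonance discussion, no induction. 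I would replace your $(\log r)^2$ analysis by this top-degree comparison; if you insist on your route, you must redo the trace computation for $C_m$ in full and verify that the quadratic form has no other contributions at order $r^{2\lambda}$ before the induction can be trusted.
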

\begin{proof}
By Proposition \ref{2019-09-30-p1} we have  the same identities as in \cite[(1.1)]{FW97}. Thus,
\begin{equation}\label{2019-09-30-a8}
J(k_M)=(-1)^nC_n k_M^{n+2}~\text{on}~D\cap B(p, \delta),
\end{equation} where $C_n=\frac{(n+1)^n\pi^n}{n!}$. On the other hand, \begin{equation}\label{2019-09-30-a9}
k_M=k_D+\varphi(z)
\end{equation} when $z\in B(p, \delta)\cap D$,
 where $\varphi\in C^\infty(B(p, \delta)\cap\overline D)$. Substituting (\ref{Bergman kernel asymptotic expansion}) and (\ref{2019-09-30-a9})
 into (\ref{2019-09-30-a8}) and by a similar argument as in the proof of
 \cite[Theorem 2.1]{FW97} we get the conclusion of the lemma.

\end{proof}
Let $\Omega\subset \mathbb C^n$ be a bounded strongly pseudocovnex domain with smooth boudnary.
The following Monge-Ampere type equation on $\Omega$ was introduced by Fefferman \cite{Fe76}
\begin{equation}
\begin{split}
J(u)\equiv(-1)^n\det{(\begin{array}{cc}
    u& u_{\overline \beta}\\
    u_{\alpha}& u_{\alpha\overline \beta}
    \end{array})}&=1~\text{in}~\Omega\\
u&=0~\text{at} ~\partial \Omega
\end{split}
\end{equation}
Fefferman proved that $\Omega$ has smooth defining function $r_F$ which satisfies $$J(r_F)=1+O(r_F^{n+1}).$$ We call $r_F$ Fefferman's defining function for $\Omega$. Let us recall Fefferman's construction of such defining function. The existence of such $r_F$ can be  established in the following steps: Starting with $\Omega=\{r>0\}$ and $dr|_{\partial \Omega}\neq 0$, Fefferman defined recursively
\begin{equation}\label{Fefferman's construction}
\begin{split}
u^1&=\frac{r}{(J(r))^{1/n+1}},\\
u^s&=u^{s-1}\left(1+ \frac{1-J(u^{s-1})}{[n+2-s]s}\right), 2\leq s\leq n+1.
\end{split}
\end{equation}
Each $u^s$  satisfies $J(u^s)=1+O(r^s)$ and $u^{n+1}$ is what we
call Fefferman defining function.

\begin{lemma}\label{10-4-lem1}
    There exists a Fefferman's defining function $r_F$ for $D$ such that
    \begin{equation}\label{2019-09-30-a13}
     r_F=\left(\frac{\pi^n}{n!}k_M\right)^{-\frac{1}{n+1}}~\text{on}~D\cap B(p, \sigma).
    \end{equation}
    for some small $\sigma$.
\end{lemma}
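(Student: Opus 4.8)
The plan is to show that the function
$u := \bigl(\tfrac{\pi^n}{n!}k_M\bigr)^{-\frac{1}{n+1}}$,
defined on $D\cap B(p,\sigma)$, is itself (after a global patch) a Fefferman defining function for $D$; the identity (\ref{2019-09-30-a13}) then holds by construction. I would carry this out in three steps: smoothness of $u$, the identity $J(u)\equiv 1$, and patching to a global defining function.

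First I would check that $u$ is a smooth defining function up to $\partial M$ near $p$. By Proposition \ref{localization of Bergman kernel}, $k_M=k_D+\varphi$ with $\varphi\in C^\infty(B(p,\delta)\cap\overline M)$, and by (\ref{Bergman kernel asymptotic expansion}), $k_D=\Phi/r^{n+1}+\Psi\log r$ with $\Phi,\Psi\in C^\infty(\overline D)$ and $\Phi|_{\partial D}\neq0$. The decisive input is Lemma \ref{2019-09-30-a11}: the K\"ahler-Einstein hypothesis forces $\Psi=O(r^k)$ for all $k$, so $\Psi$ vanishes to infinite order and $\Psi\log r\in C^\infty(\overline D)$. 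Hence $\tfrac{\pi^n}{n!}k_M=\tilde\Phi/r^{n+1}$ with $\tilde\Phi:=\tfrac{\pi^n}{n!}\Phi+r^{n+1}\tfrac{\pi^n}{n!}(\Psi\log r+\varphi)\in C^\infty$ and $\tilde\Phi|_{\partial D}\neq0$; thus $u=r\,\tilde\Phi^{-1/(n+1)}$ is the product of $r$ with a smooth nonvanishing factor, i.e.\ a smooth defining function for $D$ near $p$.

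Next I would compute $J(u)$ using the elementary identity that for a positive function $v$, setting $\psi:=-\log v$, one has $J(v)=v^{n+1}\det(\psi_{\alpha\overline\beta})$ (a one-line row reduction of the bordered complex Hessian that defines $J$). With $v=u$ we have $\psi=\tfrac{1}{n+1}\log\bigl(\tfrac{\pi^n}{n!}k_M\bigr)+\mathrm{const}$, so $\psi_{\alpha\overline\beta}=\tfrac{1}{n+1}g^M_{\alpha\overline\beta}$ and $\det(\psi_{\alpha\overline\beta})=(n+1)^{-n}G_M$; combined with $u^{n+1}=\tfrac{n!}{\pi^n}k_M^{-1}$ this gives
\[
J(u)=\frac{n!}{(n+1)^n\pi^n}\cdot\frac{G_M}{k_M}=\frac{n!}{(n+1)^n\pi^n}\,B_M .
\]
By Proposition \ref{2019-09-30-p1}, the K\"ahler-Einstein hypothesis gives $B_M\equiv\frac{(n+1)^n\pi^n}{n!}$ on $M\sm E$, so $J(u)\equiv1$ on $D\cap B(p,\sigma)$.

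Finally I would patch $u$ into a global Fefferman defining function. Pick any smooth defining function $r$ of $D$ with $r=u$ on $D\cap B(p,\sigma)$ and run the recursion (\ref{Fefferman's construction}) starting from $r$. Since $J(u)\equiv1$ there, $u^1=r/J(r)^{1/(n+1)}=u$ on $D\cap B(p,\sigma)$, and inductively each correction factor $1+\frac{1-J(u^{s-1})}{[n+2-s]s}$ equals $1$ on this set, so every iterate equals $u$ there. Then $r_F:=u^{n+1}$ is a Fefferman defining function for $D$ with $r_F=u$ on $D\cap B(p,\sigma)$, proving (\ref{2019-09-30-a13}). I expect the smoothness step to be the main obstacle: without the infinite-order vanishing of $\Psi$ from Lemma \ref{2019-09-30-a11} (which is exactly where the Einstein condition enters), the exact Monge-Amp\`ere solution $u$ would carry logarithmic terms and fail to be $C^\infty$ up to the boundary, so it could not serve as a smooth Fefferman defining function; the remaining steps are purely algebraic.
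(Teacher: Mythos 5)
Your proposal is correct and follows essentially the same route as the paper: smoothness of $\bigl(\tfrac{\pi^n}{n!}k_M\bigr)^{-1/(n+1)}$ up to the boundary via the infinite-order vanishing of $\Psi$ from Lemma \ref{2019-09-30-a11}, the identity $J(u)=\tfrac{n!}{(n+1)^n\pi^n}B_M\equiv 1$ via Proposition \ref{2019-09-30-p1} (which you usefully make explicit where the paper only asserts it), and propagation of $u$ through Fefferman's recursion. The one point you gloss over is that $J$ of the globally patched defining function may vanish somewhere in the interior of $D$, so the first step of the recursion is not defined everywhere; the paper fixes this by inserting a cutoff $\chi$ equal to $1$ near $\partial D$ and vanishing near the compact zero set $K=\{z\in D: J(r_0)=0\}$, a purely technical adjustment since only the boundary asymptotics matter.
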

\begin{proof}
First, by  Lemma \ref{localization of Bergman kernel} we have $k_M=k_D+\varphi(z)$. Then  from the Bergman kernel expansion of $k_D$ we have
\begin{equation}
\begin{split}
k_M(z, \overline z)&=k_D+\varphi=\frac{\Phi(z)}{r^{n+1}}+\Psi(z)\log r+\varphi\\
&=\frac{\Phi+r^{n+1}\Psi\log r+r^{n+1}\varphi}{r^{n+1}}
\end{split}
\end{equation}
when $z\in D\cap B(p, \delta)$. Since $k_M(z, \overline z)>0$ one has
$$\Phi+r^{n+1}\Psi\log r+r^{n+1}\varphi>0$$ for all $z\in D\cap B(p, \delta)$. Thus,
\begin{equation}
(k_M)^{-\frac{1}{n+1}}(z)=\frac{r}{(\Phi+r^{n+1}\Psi\log r+r^{n+1}\varphi)^{\frac{1}{n+1}}}
\end{equation}
is well-defined on $D\cap B(p, \delta)$. Moreover, from Lemma
\ref{2019-09-30-a11} we have that $(k_M)^{-\frac{1}{n+1}}\in
C^\infty(B(p, \delta)\cap \overline D)$. Then by partition of unity,
we can choose a defining funciton $r_0$ for $D$ such that
\begin{equation}\label{10-8-a1}
r_0=(\frac{\pi^n}{n!}k_M)^{-\frac{1}{n+1}}~\text{on}~D\cap B(p, \frac{\delta}{2}).
\end{equation}
This idea  has been crucially used in Huang-Xiao \cite{HX16} to
construct a Fefferman's defining function which satisfy the
Monge-Ampere equation.

Let $r_F$ be a Fefferman defining function for $D$. Then $r_F=h r_0$ for some $h\in C^\infty(\overline D)$ and $h>0$ on $D$. Since $$J(r_F)=h^{n+1}J(r_0)~\text{on}~\partial D$$
and $J(r_F)=1$ on $\partial D$, thus $J(r_0)\neq 0$ on $\partial D$. Thus, by continuty $J(r_0)\neq 0$ in a neighborhood of $\partial D$. So the set $K=\{z\in D: J(r_0)=0\}$ is a compact subset of $D$. Choose a cut-off function $\chi$ such that $\chi\equiv 1$ in a neighborhood of $\partial D$ and $\chi\equiv 0$ in a neighborood of $K$. Set $$u^1=\chi \frac{r_0}{(J(r_0))^{\frac{1}{n+1}}}.$$ Then we still have $J(u^1)=1$ on $\partial D$.
We notice that the Kahler-Einstein condition of the Bergman metric implies that
$J(\frac{\pi^n}{n!}k_M)^{-\frac{1}{n+1}}=1$ for $z\in D$, so $J(r_0)\equiv 1$ on $D\cap B(p, \frac{\delta}{2})$ by the construction of $r_0$ in (\ref{10-8-a1}). Then
\begin{equation}\label{2019-09-30-a12}
J(u^1)=1~\text{on}~D\cap B(p, \sigma),
\end{equation}
for some $\sigma<\frac{\delta}{2}.$
Then from Fefferman's construction of Fefferman defining function (\ref{Fefferman's construction})  we see that
\begin{equation}
u^1=u^2=\cdots=u^{n+1}=r_0~\text{on}~D\cap B(p, \sigma).
\end{equation}
Combing with (\ref{2019-09-30-a12}) and changing the values of $u_{n+1}$
in a certain compact subset of $M$ if needed,  we get the conclusion
of the lemma.
 \end{proof}
\section{Proof of Theorem \ref{mthm1}}
\begin{proof}
For any $p\in\partial M$, let $D$ and $B(p, \delta)$ be the sets as chosen in lemma \ref{localization of Bergman kernel}. Let $r_F$ be the Fefferman defining for $D$ function as chosen in lemma \ref{10-4-lem1}.
By Fefferman's Bergman asymptotic expansion on $D$, we have
\begin{equation}
k_D(z, z)=\frac{\phi}{r_F^{n+1}}+\psi \log r_F,
\end{equation}
where $\phi, \psi\in C^\infty(\overline D)$ and $\phi|_{\partial D}\neq 0$.
On the other hand, by lemma \ref{localization of Bergman kernel},
$$k_M(z, \overline z)=k_D(z, \overline z)+\varphi(z), z\in B(p, \delta)\cap D$$ where $\varphi\in C^\infty(B(p, \delta)\cap \overline D)$. Thus,
\begin{equation}\label{2019-09-30-a14}
k_M r_F^{n+1}=\phi+\psi r_F^{n+1}\log r_F+\varphi r_F^{n+1} ~\text{on}~B(p, \delta)\cap D.
\end{equation}
Substituting (\ref{2019-09-30-a13}) to (\ref{2019-09-30-a14}) we have
\begin{equation}\label{4-19-a1}
\frac{n!}{\pi^n}=\phi+\psi r_F^{n+1}\log r_F+\varphi r_F^{n+1}~\text{on}~D\cap B(p, \sigma).
\end{equation}
By \cite[Lemma 2.2]{FW97}, we have
\begin{equation}\label{4-19-a2}
\phi-\varphi r_F^{n+1}-\frac{n!}{\pi^n}=O(r_F^k), \psi=O(r_F^k) ~\text{on}~D\cap B(p, \sigma), \forall k>0.
\end{equation}
Thus,
\begin{equation}\label{4-19-a3}
\phi-\frac{n!}{\pi^n}=O(r_F^{n+1})~\text{on}~D\cap B(p, \sigma).
\end{equation}
When $n=2$, $\psi=O(r_F^k) ~\text{on}~D\cap B(p, \sigma), \forall k>0$ implies that $\partial D\cap B(p, \sigma)$ is spherical by a result of Burns-Graham \cite[pp.129]{Gr85} (also see \cite[pp.23]{BdM90}).
When $n\geq 3$, it follows from (\ref{4-19-a3}) that $\partial D\cap B(p, \sigma)$ is spherical by combining the Morse normal form theory \cite{CM74} and a result of Christoffers \cite{Ch81}
 as argued in the work of Huang-Xiao \cite[pp.6-7]{HX16}.
Thus, we get the conclusion of  Theorem \ref{mthm1}.
\end{proof}

Theorem \ref{mthm-4-18} is a direct corollary of Theorem
\ref{mthm1}. Huang \cite{H06} proved  that  a Stein space with
possible isolated normal singularities and compact strongly
pseudoconvex and algebraic boundary is biholomorphic to a ball
quotient. Then a direct corollary of  Theorem \ref{mthm-4-18} and
\cite[Theorem 3.1]{H06} is the following
\begin{corollary}
Let $\Omega$ be a stein space with isolated  normal  singularities
and a compact smooth boundary $\partial \Omega$. Assume  the
$\partial \Omega$ is CR equivalent to an algebraic CR manifold in a
complex Euclidean space. If the Bergman metric $\omega_\Omega^B$ on
$\hbox{Reg}(\Omega)$ is Kahler-Einstein then $\Omega$ is
biholomorphic to a ball quotient $\mathbb {B}^n/\Gamma$ where
$\Gamma\subset {\rm Aut}({\mathbb B}^n)$ is finite subgroup with
$0\in {\mathbb B}^n$ the only fixed point of any non-identity
element of $\Gamma$.
\end{corollary}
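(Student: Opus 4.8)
The plan is to obtain the conclusion by assembly: the corollary is meant to follow by feeding the output of Theorem \ref{mthm-4-18} into the classification of \cite[Theorem 3.1]{H06}, so the task is to check that the hypotheses of the latter are produced by our assumptions. We are handed that $\Omega$ is Stein with isolated normal singularities, that $\partial\Omega$ is smooth, compact and CR equivalent to a real-algebraic CR manifold, and that the Bergman metric $\omega_\Omega^B$ on $\hbox{Reg}(\Omega)$ is K\"ahler-Einstein. Since a Stein space with isolated singularities and a compact smooth strongly pseudoconvex boundary is exactly the setting of Theorem \ref{mthm-4-18}, the first step I would take is simply to invoke that theorem: the K\"ahler-Einstein property of $\omega_\Omega^B$ forces $\partial\Omega$ to be spherical.

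At this stage $\partial\Omega$ is a compact, smooth, strongly pseudoconvex CR manifold that is simultaneously spherical (by the previous step) and CR equivalent to an algebraic CR manifold (by hypothesis). I would emphasize that neither property alone suffices: a generic strongly pseudoconvex real-algebraic boundary is not the link of a quotient singularity, and it is exactly the sphericality coming from the Einstein condition that supplies the extra rigidity. The combined spherical-plus-algebraic structure is precisely the input required by \cite[Theorem 3.1]{H06}, which classifies a Stein space with isolated normal singularities carrying such a boundary and shows it must be biholomorphic to a ball quotient. Applying it yields $\Omega\cong\mathbb{B}^n/\Gamma$ with $\Gamma\subset{\rm Aut}(\mathbb{B}^n)$ a finite subgroup.

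It then remains to record the stated normal form of $\Gamma$. Because the singularities of $\Omega$ are isolated and normal, the quotient $\mathbb{B}^n/\Gamma$ can fail to be smooth only over the images of points with nontrivial isotropy; an isolated singularity forces every non-identity element of $\Gamma$ to share a single common fixed point, which after composing with an automorphism of the ball may be normalized to $0\in\mathbb{B}^n$. This gives exactly the description of $\Gamma$ in the statement, so no further work beyond a change of coordinates is needed here.

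The main obstacle is not the assembly but verifying that the two conditions interact as \cite[Theorem 3.1]{H06} demands. Sphericality is an intrinsic differential-CR invariant produced by the Einstein condition on the regular part, whereas algebraicity is given only up to CR equivalence; the delicate point is that the CR equivalence must transport the spherical structure onto the algebraic model while preserving strong pseudoconvexity and retaining control near the singular fibers, so that the hypotheses of the classification hold globally rather than merely locally. Once this compatibility is confirmed, \cite[Theorem 3.1]{H06} applies verbatim and the corollary follows.
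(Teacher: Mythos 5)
Your proposal matches the paper's own (very brief) argument: the paper likewise obtains sphericality of $\partial\Omega$ from Theorem \ref{mthm-4-18} and then cites \cite[Theorem 3.1]{H06} for the spherical-plus-algebraic boundary to conclude $\Omega\cong\mathbb{B}^n/\Gamma$, with the normal form of $\Gamma$ coming from the singularities being isolated. Your extra remarks on CR-invariance of sphericality and on normalizing the common fixed point to $0$ are correct elaborations of what the paper leaves implicit.
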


\section{Bergman metric on a ball quotient}
Let $\Omega:=\mathbb B^n/\Gamma$ where $\Gamma$ is a finite subgroup
of ${\rm Aut(\mathbb B^n)}$ with $0$ as the  unique fixed point for
each non-identity element.
%In this section we assume that $n\geq 2$.
Then $\Omega$ is a stein space with only an isolated singularity.
Let $\pi: \mathbb B^n\rightarrow \mathbb B^n/\Gamma$ be the standard
branched covering map. Write $p=\pi(0)$.  Let $\omega^B$ be the
Bergman metric on $\Omega$. Let $A^2(\Omega)$ be the
$L^2$-integrable holomorphic $(n, 0)$-forms on ${\rm Reg}(\Omega)$.
Let $\{\alpha_j\}_{j=1}^\infty$ be an orthnormal basis of
$A^2(\Omega)$. Locally, write $\alpha_j=a_j dw, j\geq 1$ and
$k_{\Omega}(w, \overline w)=\sum_{j=1}^\infty|a_j|^2$. Then
$\omega_\Omega^B=i\partial\overline\partial \log k_{\Omega}(w,
\overline w)$. Write $\pi^\ast \alpha_j=f_j dz$ where
$dz=dz_1\wedge\cdots\wedge dz_n$ and $\{f_j\}$ are holomorphic
functions on $\mathbb B^n\setminus\{0\}$. By the Hartogs extension
theorem, $\{f_j\}$ can be holomorphically extended to $\mathbb B^n$.
Moreover, $f_j$ satisfies $$f_j\circ \gamma (z)\det\gamma=f_j(z),
\forall \gamma\in\Gamma, \forall z\in\mathbb B^n.$$

Set $A^2_{\Gamma}(\mathbb B^n)=\{f\in A^2(\mathbb B^n): f\circ \gamma \det\gamma=f,
\forall \gamma\in \Gamma\}.$ Then $A^2_{\Gamma}(\mathbb B^n)$ is a closed
 subspace of $A^2(\mathbb B^n)$. Let $P_\Gamma: L^2(\mathbb B^n)\rightarrow A^2_{\Gamma}
 (\mathbb B^n)$ be the orthogonal projection. Let $\{f_j\}_{j=1}^\infty$ be an
  orthnormal basis of $A^2_{\Gamma}(\mathbb B^n)$. Write $$K_{\Gamma}(z, \overline w)=
  \sum_{j=1}^\infty f_j(z)\overline f_j(w), z, w\in \mathbb B^n.$$ $K_\Gamma(z, \overline w)$
 is then the Schwarz kernel of $P_\Gamma$. That is,
$$P_\Gamma f=\int_{\mathbb B^n} K_{\Gamma}(z, \overline w) f(w)dv$$ where
 $dv$ is the Lebesgue measure on $\mathbb C^n$.
Define $$Q_\Gamma f=\int_{\mathbb B^n} \frac{1}{|\Gamma|}\sum_{\gamma\in\Gamma}
 K(\gamma z, \overline w) \det\gamma f(w)dv, \forall f\in L^2(\mathbb B^n)$$
where $K(z, \overline w)$ is the Bergman kernel function of the
$\mathbb B^n$.
 Then $K(z, \overline w)=\frac{n!}{\pi^n}\frac{1}{(1-z\cdot \overline w)^{n+1}}$ and
  $z\cdot\overline w=z_1\overline w_1+\cdots+z_n\overline w_n$. Then
$Q_\Gamma f\in A^2_\Gamma(\mathbb B^n)$ for all $f\in L^2(\mathbb B^n)$. Moreover,
\begin{equation}\label{19-11-16a}
\frac{1}{|\Gamma|}\sum_{\gamma\in\Gamma} K(\gamma z, \overline {\tau w})
\det\gamma\det\overline\tau=\frac{1}{|\Gamma|}\sum_{\gamma\in\Gamma} K(\gamma z,
\overline { w})\det\gamma, \forall \tau \in\Gamma.
\end{equation}
In fact, $\overline{\tau^t}=\tau^{-1}\in\Gamma, \forall \tau\in\Gamma$ where $\tau^t$ is the transpose matrix of $\tau$, then
\begin{equation*}
\begin{split}
\frac{1}{|\Gamma|}\sum_{\gamma\in\Gamma}K(\gamma z, \overline{\tau w} )\det\gamma \det{\overline\tau}
&=\frac{c_n}{|\Gamma|}\sum_{\gamma\in \Gamma} \frac{1}{(1-z^t \gamma^t\cdot \overline {\tau }\overline w)^{n+1}}\det\gamma \det{\overline\tau}\\
&=\frac{c_n}{|\Gamma|}\sum_{\gamma\in\Gamma}\frac{1}{(1-z^t(\overline\tau^t\gamma)^t\overline w)^{n+1}}\det({\overline \tau^t}\gamma )\\
&=\frac{1}{|\Gamma|}\sum_{\gamma\in\Gamma} K(\gamma z, \overline w)\det\gamma.
\end{split}
\end{equation*}
Here, $c_n=\frac{n!}{\pi^n}$.
\begin{lemma}\label{19-11-lem1}
\begin{equation}
Q_\Gamma=P_\Gamma~\text{on}~L^2(\mathbb B^n); ~~
K_{\Gamma}(z, \overline w)=\frac{1}{|\Gamma|}\sum_{\gamma\in \Gamma}K(\gamma z, \overline w)\det\gamma.
    \end{equation}
\begin{proof}
    For all $f\in L^2(\mathbb B^n)$, write $f=f_1+f_2$ where $f_1=P_\Gamma f$ and $f_1\perp f_2$ and $f_2\perp A^2_{\Gamma}(\mathbb B^n)$.
    By (\ref{19-11-16a}), one has
    \begin{equation}
    \begin{split}
    Q_\Gamma f
    &=\frac{1}{|\Gamma|}\int_{\mathbb B^n}\sum_{\gamma\in\Gamma} K(\gamma z, \overline w)\det\gamma f_1(w)dv+\frac{1}{|\Gamma|}\int_{\mathbb B^n}\sum_{\gamma\in\Gamma} K(\gamma z, \overline w)\det\gamma f_2(w)dv\\
    &=\frac{1}{|\Gamma|}\int_{\mathbb B^n}\sum_{\gamma\in\Gamma} K(\gamma z, \overline w)\det\gamma f_1(w)dv\\
    &=\frac{1}{|\Gamma|}\sum_{\gamma\in\Gamma}\det\gamma f_1(\gamma z)=f_1(z)\\
    &=P_{\Gamma}f.
    \end{split}
    \end{equation}
    As a consequence, $Q_\Gamma$ and $P_\Gamma$ have the same Schwarz kernel. Thus, we get the conclusion of the second part of the lemma.
\end{proof}
\end{lemma}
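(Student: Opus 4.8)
The plan is to prove the operator identity $Q_\Gamma = P_\Gamma$ on $L^2(\mathbb B^n)$ first, and then read off the kernel formula from uniqueness of the Schwarz kernel. Since it has already been noted that $Q_\Gamma f\in A^2_\Gamma(\mathbb B^n)$ for every $f\in L^2(\mathbb B^n)$, it is enough to show that $Q_\Gamma$ acts as the identity on $A^2_\Gamma(\mathbb B^n)$ and annihilates its orthogonal complement. Accordingly, I would fix $f\in L^2(\mathbb B^n)$, decompose $f=f_1+f_2$ with $f_1=P_\Gamma f\in A^2_\Gamma(\mathbb B^n)$ and $f_2\perp A^2_\Gamma(\mathbb B^n)$, and compute $Q_\Gamma f_1$ and $Q_\Gamma f_2$ separately.

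For the piece $Q_\Gamma f_1$, I would pull the finite sum out of the integral and apply the reproducing property of the ordinary Bergman kernel $K$ of $\mathbb B^n$ to each summand: since $f_1\in A^2(\mathbb B^n)$, one has $\int_{\mathbb B^n} K(\gamma z, \overline w) f_1(w)\,dv = f_1(\gamma z)$, so that $Q_\Gamma f_1(z)=\frac{1}{|\Gamma|}\sum_{\gamma\in\Gamma}\det\gamma\,f_1(\gamma z)$. The defining invariance $f_1\circ\gamma\,\det\gamma = f_1$ of the space $A^2_\Gamma(\mathbb B^n)$ then collapses every summand to $f_1(z)$, yielding $Q_\Gamma f_1 = f_1 = P_\Gamma f$.

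For the piece $Q_\Gamma f_2$, the key is to recognize the averaged kernel, viewed as a function of the second variable, as an element of $A^2_\Gamma(\mathbb B^n)$. Setting $h_z(w):=\overline{\frac{1}{|\Gamma|}\sum_{\gamma\in\Gamma}K(\gamma z, \overline w)\det\gamma}$, which is holomorphic and $L^2$ in $w$ for each fixed $z\in\mathbb B^n$, I would rewrite $Q_\Gamma f_2(z)=(f_2, h_z)$. Taking complex conjugates in the invariance relation (\ref{19-11-16a}) gives exactly $h_z(\tau w)\det\tau = h_z(w)$ for all $\tau\in\Gamma$, i.e. $h_z\in A^2_\Gamma(\mathbb B^n)$. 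Since $f_2\perp A^2_\Gamma(\mathbb B^n)$, this forces $Q_\Gamma f_2\equiv 0$. Combining the two pieces gives $Q_\Gamma f = P_\Gamma f$ for all $f$, and as the two operators now coincide on $L^2(\mathbb B^n)$ they share the same Schwarz kernel; comparing with the definition of $K_\Gamma$ as the kernel of $P_\Gamma$ produces the stated identity $K_\Gamma(z,\overline w)=\frac{1}{|\Gamma|}\sum_{\gamma\in\Gamma}K(\gamma z, \overline w)\det\gamma$.

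I expect the main obstacle to be the vanishing of $Q_\Gamma f_2$: one must keep careful track of the conjugation conventions in order to verify that $h_z$ truly lies in $A^2_\Gamma(\mathbb B^n)$, which is precisely the transformation law that (\ref{19-11-16a}) is engineered to supply, and one should confirm that $h_z$ is holomorphic and $L^2$ in $w$ so that the orthogonality $(f_2,h_z)=0$ is legitimate, as well as justify interchanging the finite sum with the integral defining $Q_\Gamma$. By contrast, the computation of $Q_\Gamma f_1$ is routine once the reproducing property of $K$ and the invariance of $f_1$ are invoked.
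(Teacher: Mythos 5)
Your proposal is correct and follows essentially the same route as the paper: decompose $f=f_1+f_2$ with $f_1=P_\Gamma f$, use the reproducing property plus the invariance $f_1\circ\gamma\,\det\gamma=f_1$ to get $Q_\Gamma f_1=f_1$, and use (\ref{19-11-16a}) to see that the averaged kernel in the $w$-variable lies in $A^2_\Gamma(\mathbb B^n)$ so that $Q_\Gamma f_2=0$, then identify the Schwarz kernels. Your treatment of the $f_2$ term is slightly more explicit than the paper's, which simply cites (\ref{19-11-16a}), but the argument is the same.
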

Write $\omega_\Gamma=i\partial\overline\partial \log K_\Gamma(z, \overline z)$. Then we have the following
\begin{lemma}
\begin{equation}
\pi^\ast \omega_\Omega^B=\omega_\Gamma.
\end{equation}
Moreover,   $\omega_\Omega^B$ is K\"ahler-Einstein if and only if
$\omega_\Gamma$ is K\"ahler-Einstein on $\mathbb B^n\setminus\{0\}$.
\end{lemma}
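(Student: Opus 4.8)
The plan is to first establish the identity $\pi^\ast\omega_\Omega^B=\omega_\Gamma$ by relating the two Bergman kernels through the covering map $\pi$, and then to deduce the K\"ahler--Einstein equivalence from the naturality of the Ricci form under local biholomorphisms. The key structural fact to exploit is that since every non-identity element of $\Gamma$ fixes $0$, each $\gamma\in\Gamma$ is a unitary rotation, so $\det\gamma$ is a unimodular constant and $\gamma^\ast dz=\det\gamma\,dz$.

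First I would set up an isometry between the two Bergman spaces. The assignment $T\alpha=f$, where $\pi^\ast\alpha=f\,dz$, maps $A^2(\Omega)$ into $A^2_\Gamma(\mathbb B^n)$: the transformation law $f\circ\gamma\,\det\gamma=f$ is precisely the $\Gamma$-invariance of $f\,dz$, and conversely every $\Gamma$-invariant $L^2$ holomorphic $(n,0)$-form on $\mathbb B^n$ descends to ${\rm Reg}(\Omega)$, the origin being removable by Hartogs. Because $\pi:\mathbb B^n\setminus\{0\}\to {\rm Reg}(\Omega)$ is an unbranched covering of degree $|\Gamma|$ and $\{0\}$ has measure zero,
\[
(-1)^{\frac{n^2}{2}}\int_{{\rm Reg}(\Omega)}\alpha\wedge\overline\alpha=\frac{1}{|\Gamma|}(-1)^{\frac{n^2}{2}}\int_{\mathbb B^n}f\,dz\wedge\overline{f\,dz},
\]
so $\frac{1}{\sqrt{|\Gamma|}}T$ is an isometric isomorphism. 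Hence, if $\{\alpha_j\}$ is an orthonormal basis of $A^2(\Omega)$ with $\pi^\ast\alpha_j=f_j\,dz$, then $\{\frac{1}{\sqrt{|\Gamma|}}f_j\}$ is an orthonormal basis of $A^2_\Gamma(\mathbb B^n)$; since $K_\Gamma$ is the basis-independent reproducing kernel, $K_\Gamma(z,\overline z)=\frac{1}{|\Gamma|}\sum_j|f_j(z)|^2$. Next I would compare the kernels pointwise on $\mathbb B^n\setminus\{0\}$: in local coordinates where $w=\pi(z)$ is a biholomorphism, writing $\alpha_j=a_j\,dw$ and $J=\det(\partial\pi/\partial z)$ gives $f_j(z)=a_j(\pi(z))J(z)$, so that $K_\Gamma(z,\overline z)=\frac{1}{|\Gamma|}|J(z)|^2\,k_\Omega(\pi(z),\overline{\pi(z)})$. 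Taking logarithms,
\[
\log K_\Gamma(z,\overline z)=\log k_\Omega(\pi(z),\overline{\pi(z)})+\log|J(z)|^2-\log|\Gamma|.
\]
Since $\pi$ is a local biholomorphism there, $J$ is nowhere zero, so $\log|J|^2$ is locally the sum of a holomorphic and an antiholomorphic function, hence pluriharmonic, while the constant is annihilated by $i\partial\overline\partial$. Applying $i\partial\overline\partial$ yields $\omega_\Gamma=\pi^\ast(i\partial\overline\partial\log k_\Omega)=\pi^\ast\omega_\Omega^B$.

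For the K\"ahler--Einstein equivalence I would invoke the naturality of the Ricci form under the local biholomorphism $\pi$, namely ${\rm Ric}(\pi^\ast\omega_\Omega^B)=\pi^\ast{\rm Ric}(\omega_\Omega^B)$. If ${\rm Ric}(\omega_\Omega^B)=c\,\omega_\Omega^B$, then pulling back gives ${\rm Ric}(\omega_\Gamma)=c\,\omega_\Gamma$; conversely, if ${\rm Ric}(\omega_\Gamma)=c\,\omega_\Gamma$ on $\mathbb B^n\setminus\{0\}$, then $\pi^\ast({\rm Ric}(\omega_\Omega^B)-c\,\omega_\Omega^B)=0$, and since $\pi$ is locally invertible and surjective onto ${\rm Reg}(\Omega)$, the identity descends to ${\rm Ric}(\omega_\Omega^B)=c\,\omega_\Omega^B$. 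The step I expect to be most delicate is the isometry with the correct factor $|\Gamma|$: one must verify that the branch locus $\{0\}$ (equivalently the singular point $p$) contributes nothing to either the integral or the covering degree, and that Hartogs extension legitimately identifies $A^2_\Gamma(\mathbb B^n)$ with the pullback of $A^2(\Omega)$.
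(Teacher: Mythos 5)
Your proposal is correct and follows essentially the same route as the paper: identify $A^2(\Omega)$ with $A^2_\Gamma(\mathbb B^n)$ via pullback, check that $\{\tfrac{1}{\sqrt{|\Gamma|}}f_j\}$ is an orthonormal basis so that $K_\Gamma=\tfrac{1}{|\Gamma|}|J|^2\,k_\Omega\circ\pi$, and apply $i\partial\overline\partial\log$. Your observation that $\log|J|^2$ is pluriharmonic (rather than the paper's coordinate-dependent claim $|\det\pi'|=1$) is a minor but welcome refinement, and the Ricci-naturality argument for the K\"ahler--Einstein equivalence matches what the paper leaves implicit.
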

\begin{proof}
    Let $\{\alpha_j\}$ be an orthnormal basis of $A^2(\Omega)$. Write $\alpha_j=a_jdw$ and $\pi^\ast \alpha_j=f_j dz$ on $\mathbb B^n\setminus{\{0\}}$. Here  $w=(w_1, \cdots, w_n)$ are local coordinates on ${\rm Reg}~\Omega$ and $dw=dw_1\wedge\cdots\wedge dw_n$.
    We have $a_j\circ\pi \det \pi^{'}=f_j$. Since $\Gamma\subset{\rm Aut}~(\mathbb B^n)$, then $|\det{\pi'}|^2=1$. Thus,
    \begin{equation}\label{19-11-19-a1}
    |a_j\circ \pi|^2=|f_j|^2, \forall j.
    \end{equation}
    \begin{equation}
    \frac{1}{i^{n^2}}\int_{\mathbb B^n}f_j\overline {f_k}dz\wedge d\overline z=\frac{1}{i^{n^2}}\int_{\mathbb B^n}\pi^\ast\alpha_j\wedge\overline{\pi^\ast\alpha_k}=\frac{1}{i^{n^2}}|\Gamma|\int_{\Omega}\alpha_j\wedge\overline{\alpha_k}=|\Gamma|\delta_{jk}.
    \end{equation}
    For any $f\in A^2_{\Gamma}(\mathbb B^n)$, there exist an $\alpha\in A^2(\Omega)$ such that $\pi^\ast\alpha=f(z)dz$.
    Thus, $\{\frac{1}{\sqrt{|\Gamma|}} f_j\}$ is an orthonormal basis of $A^2_{\Gamma}(\mathbb B^n)$.  Then combine with (\ref{19-11-19-a1})
    \begin{equation}
    K_{\Gamma}(z, \overline z)=\frac{1}{|\Gamma|}\sum_{j=1}^\infty|f_j(z)|^2=\frac{1}{|\Gamma|}|a_j\circ \pi|^2=\frac{1}{|\Gamma|}\pi^\ast k_{\Omega}.
    \end{equation}
    By taking the $\partial\overline\partial\log$ on both sides of the above equation we get the conclusion of the lemma.
    \end{proof}

Assume that $\omega_\Omega^B$ is K\"ahler-Einstein. Then
$\omega_{\Gamma}$ is Kahler-Einstein on $\mathbb B^n\setminus\{0\}$.
The Bergman kerenl on $\mathbb B^n$ is denoted by $K(z, \overline
z)$. Then
$$K(z, \overline z)=\frac{n!}{\pi^n}\frac{1}{(1-|z|^2)^{n+1}}.$$ By Lemma \ref{19-11-lem1}
\begin{equation}
\begin{split}
K_{\Gamma}(z, \overline z)&=\frac{1}{|\Gamma|}\sum_{\gamma\in\Gamma}K(\gamma z, \overline z)\det\gamma=\frac{1}{|\Gamma|}\frac{n!}{\pi^n}\sum_{\gamma\in\Gamma}\frac{1}{(1-\gamma z\cdot\overline z)^{n+1}}\det\gamma\\
&=\frac{n!}{\pi^n}\frac{1}{|\Gamma|}\left[\frac{1}{(1-|z|^2)^{n+1}}+\Psi(z)\right],
\end{split}
\end{equation}
where $\Psi=\sum_{\gamma\neq id}\frac{1}{(1-\gamma z\cdot\overline z)^{n+1}}\det\gamma.$ Since $1-\gamma z\cdot\overline z\neq 0, \forall z\in \partial B^n$ when $\gamma\neq id$, it follows that $\Psi(z)\in C^\infty(\overline {\mathbb B^n})$.
Then
\begin{equation}\label{19-11-19-a2}
\omega_\Gamma=i\partial\overline\partial\log K_\Gamma=i\partial\overline\partial\log{\frac{1}{(1-|z|^2)^{n+1}}}+i\partial\overline\partial\log{(1+\tilde\Psi)}
\end{equation}
where $\tilde\Psi=\Psi(z)(1-|z|^2)^{n+1}.$ Write $\omega_\Gamma=i \sum_{i, j=1}^ng_{i\overline j}dz_i\wedge d\overline z_j$. By direct calculation,
\begin{equation}
g_{i\overline j}=(n+1)\left\{\frac{\delta_{ij}}{1-|z|^2}+\frac{\overline z_i z_j}{(1-|z|^2)^2}\right\}+O((1-|z|^2)^{n-1}).
\end{equation}
Here, we mention that  $O(f)$ means that there exist a constant $C>0$ such that the term can be bounded by $C|f|$ near $\partial B^n$.
Then
\begin{equation}\label{19-11-19-a4}
\begin{split}
\det{g_{i\overline j}}=(n+1)^n\frac{1}{(1-|z|^2)^{n+1}}+O((1-|z|^2)^{-n+1})\\
=(n+1)^n\frac{1}{(1-|z|^2)^{n+1}}[1+O((1-|z|^2)^2)].
\end{split}
\end{equation}
Then the Ricci curvature with respect to $\omega_\Gamma$ is given by
\begin{equation}\label{19-11-19-a3}
\begin{split}
\Theta_{\Gamma}&=i\overline\partial\partial\log{\det g_{i\overline j}}=-(n+1)i\overline\partial\partial\log{(1-|z|^2)}+\overline\partial\partial [O((1-|z|^2)^2)]\\
&=-(n+1)i\overline\partial\partial\log{(1-|z|^2)}+O(1).
\end{split}
\end{equation}
Since $\omega_\Gamma$ is Kahler-Einstein on $\mathbb B^n\setminus\{0\}$, then $\Theta_\Gamma=c_0\omega_\Gamma$ where $c_0$ is a constant. From (\ref{19-11-19-a2}) and (\ref{19-11-19-a3}) we have
\begin{equation}
-(n+1)\overline\partial\partial\log{(1-|z|^2)}+O(1)=c_0[-(n+1)\partial\overline\partial\log{(1-|z|^2)}+\partial\overline\partial \log{(1+\tilde\Psi)}].
\end{equation}
Letting $z\rightarrow \partial\mathbb B^n$, we have $c_0=-1$.
\begin{theorem}\label{4-24-l1}
Set $u=\log{K_{\Gamma}}$. Then the Bergman metric on
${\rm Reg}(\Omega)$ is K\"ahler-Einstein  with $n\ge 2$ if  $u$
satisfies the following complex Monge-Ampere equation
\begin{equation}\label{MAequ}
\det (u_{i\overline j})=c e^u ~\text{on}~\mathbb B^n\setminus\{0\},
\ u|_{\partial {\mathbb B}_n}=\infty.
\end{equation}
where $c= \frac{(n+1)^n \pi^n |\Gamma|}{n!}.$ Conversely, if $u$
satisfies (\ref{MAequ}), then the Bergman metric on
${\rm Reg}(\Omega)$ is K\"ahler-Einstein.
\end{theorem}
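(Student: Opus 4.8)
The plan is to prove the equivalence asserted in the theorem: that $\omega_\Omega^B$ is K\"ahler--Einstein on ${\rm Reg}(\Omega)$ if and only if $u=\log K_\Gamma$ satisfies (\ref{MAequ}). Since $\pi^\ast\omega_\Omega^B=\omega_\Gamma$ and, as shown above, $\omega_\Omega^B$ is K\"ahler--Einstein on ${\rm Reg}(\Omega)$ exactly when $\omega_\Gamma$ is K\"ahler--Einstein on $\mathbb B^n\setminus\{0\}$, it suffices to argue upstairs on $\mathbb B^n\setminus\{0\}$. Writing $g_{i\overline j}=u_{i\overline j}=\partial_i\partial_{\overline j}\log K_\Gamma$ for the metric coefficients, the Ricci tensor is $R_{i\overline j}=-\partial_i\partial_{\overline j}\log\det(u_{k\overline l})$, and by the computation carried out just before the theorem the Einstein constant is forced to be $c_0=-1$.

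I would first dispatch the easy direction. Assume $u$ satisfies (\ref{MAequ}), i.e. $\det(u_{i\overline j})=ce^u=cK_\Gamma$. Taking $\log$ and applying $\partial_i\partial_{\overline j}$ annihilates the constant $\log c$ and yields $\partial_i\partial_{\overline j}\log\det(u_{k\overline l})=\partial_i\partial_{\overline j}u=u_{i\overline j}=g_{i\overline j}$, hence $R_{i\overline j}=-g_{i\overline j}$; thus $\omega_\Gamma$, and therefore $\omega_\Omega^B$, is K\"ahler--Einstein.

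For the converse I would assume $\omega_\Gamma$ is K\"ahler--Einstein, so $R_{i\overline j}=-g_{i\overline j}$, that is $\partial_i\partial_{\overline j}\big[\log\det(u_{k\overline l})-u\big]=0$ on $\mathbb B^n\setminus\{0\}$. Hence $v:=\log\det(u_{k\overline l})-u=\log\big(\det(u_{i\overline j})/K_\Gamma\big)$ is pluriharmonic there. Because $n\ge 2$, the punctured ball $\mathbb B^n\setminus\{0\}$ is simply connected, so $v=\mathrm{Re}\,F$ for a holomorphic $F$; and since $n\ge 2$, Hartogs' theorem extends $F$ across the origin, making $v$ harmonic on all of $\mathbb B^n$. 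The remaining task is to show $v\equiv\log c$.

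The crux is the boundary analysis. Using the expansion $K_\Gamma=\frac{n!}{\pi^n|\Gamma|}\big[(1-|z|^2)^{-(n+1)}+\Psi\big]$ with $\Psi\in C^\infty(\overline{\mathbb B^n})$, together with the determinant asymptotics $\det(u_{i\overline j})=(n+1)^n(1-|z|^2)^{-(n+1)}[1+O((1-|z|^2)^2)]$ recorded in (\ref{19-11-19-a4}) (here $n\ge 2$ is what makes the smooth correction $\partial_i\partial_{\overline j}\log(1+(1-|z|^2)^{n+1}\Psi)$ negligible relative to the metric), one computes $e^v=\det(u_{i\overline j})/K_\Gamma\to \frac{(n+1)^n\pi^n|\Gamma|}{n!}=c$ as $z\to\partial\mathbb B^n$. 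Thus $v$ extends continuously to $\overline{\mathbb B^n}$ with the constant boundary value $\log c$, and the maximum principle for harmonic functions forces $v\equiv\log c$ on $\mathbb B^n$, i.e. $\det(u_{i\overline j})=ce^u$, which is (\ref{MAequ}). I expect this final step --- extracting the exact limiting constant $c$ from the two asymptotic expansions and justifying the continuous boundary extension required by the maximum principle --- to be the main obstacle, the rest being formal manipulation of the Ricci identity.
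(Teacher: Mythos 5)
Your proposal is correct and follows essentially the same route as the paper: both directions reduce to the pluriharmonicity of $v=\log\det(u_{i\overline j})-u$ on $\mathbb B^n\setminus\{0\}$, its extension across the origin for $n\ge 2$, and the boundary asymptotics of $K_\Gamma$ and $\det(u_{i\overline j})$ forcing $e^v\to \frac{(n+1)^n\pi^n|\Gamma|}{n!}$. Your explicit justifications of the extension (simple connectivity plus Hartogs) and of the final constancy (maximum principle with the constant boundary limit) merely fill in steps the paper leaves implicit.
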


\begin{proof} We only need to prove the necessary part.
 The proof is similar to that for $B_M=const$.
From $\Theta_\Gamma=-\omega_\Gamma$, we have  that $\log({\det u_{i\overline j}})-u$ is a pluriharmonic function on $\mathbb B^n\setminus\{0\}$. Write $v=\log({\det u_{i\overline j}})-u$.  Since $n\geq 2$, then $v$ can be smoothly extended to $\mathbb B^n$ which is still denoted by $v$. Then $v$ is a pluriharmonic function on $\mathbb B^n$. Thus, $u=\log{K_{\Gamma}}$ satisfies the following
\begin{equation}\label{19-11-19-a5}
\det{u_{i\overline j}}=e^ve^u.
\end{equation}
Substituting (\ref{19-11-19-a4}) to (\ref{19-11-19-a5}) we have
\begin{equation}
\frac{(n+1)^n}{(1-|z|^2)^{n+1}}[1+O((1-|z|^2)^2)]=e^v \frac{n!}{\pi^n}\frac{1}{|\Gamma|}\left[\frac{1}{(1-|z|^2)^{n+1}}+\Psi(z)\right].
\end{equation}
Letting $z\rightarrow \partial\mathbb B^n$, we have
$$e^v\rightarrow \frac{(n+1)^n \pi^n |\Gamma|}{n!}.$$
Since $v$ is pluriharmonic on $\mathbb B^n$, then
$$e^v\equiv   \frac{(n+1)^n \pi^n |\Gamma|}{n!},  \forall z\in\mathbb B^n.$$
Thus, $u=\log{K_{\Gamma}}$ satisfies the following Monge-Ampere equation
\begin{equation} \label{1010}
\det u_{i\overline j}=c e^u,
\end{equation}
where $c= \frac{(n+1)^n \pi^n |\Gamma|}{n!}.$
\end{proof}
%Theorem \ref{4-24-l1} gives a necessary and sufficient condition on
%when the Bergman metric on the ball quotient is K\"ahler-Einstein.
%In the following example, we will use this equation to give some
%Bergman-Einstein and non Bergman-Einstein ball quotients.

% examples
%we will show that $u=\log K_{\Gamma}$ does not satisfy the
%Monge-Ampere equation (\ref{MAequ}), then it follows that the
%Bergman metric on the ball quotient $\mathbb B^n/\Gamma$ will not be
%K\"ahler-Einstein in the given example.
We notice that if $u=\log K_\Gamma$ satisfies (\ref{1010}) with
$K_\Gamma(0,0)\not = 0$, then by continuity $\omega_\Gamma$ is a
well-defined complete Ka\"hler-Einstein metric over ${\mathbb B}^n$.
Hence, by the uniqueness of the Cheng-Yau metric \cite{CY80},
$\omega_\Gamma$ is a hyperbolic metric and thus by the
uniformization theorem, we see that $\Gamma=\{\hbox{id}\}$ and thus
$\Omega$ is biholomorphic to the ball. Namely, we have the
following:
\begin{corollary}
Let $\Gamma\subset \hbox{Aut}_0({\mathbb B}^n)$ with $n\ge 2$ be a
non-trivial finite subgroup with $0$ as the only fixed point for
each non-identity element of $\Gamma$. Let $K_{\Gamma}$ be the
function defined in (\ref{19-11-lem1}). If $K_{\Gamma}(0,0)\not =0$,
then the Bergman metric of ${\rm Reg}({\mathbb B}^n/\Gamma)$ is not K\"ahler-Einstein.
\end{corollary}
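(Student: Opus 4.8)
The plan is to argue by contraposition: assuming the Bergman metric $\omega_\Omega^B$ on ${\rm Reg}(\mathbb B^n/\Gamma)$ is K\"ahler-Einstein, I will derive that $\Gamma$ must be trivial, contradicting the hypothesis. By the lemma identifying $\pi^\ast\omega_\Omega^B$ with $\omega_\Gamma$, the form $\omega_\Gamma=i\partial\overline\partial\log K_\Gamma$ is then K\"ahler-Einstein on $\mathbb B^n\setminus\{0\}$ with Einstein constant $-1$ (the value computed above), so Theorem \ref{4-24-l1} gives that $u=\log K_\Gamma$ solves the Monge-Amp\`ere equation $\det u_{i\overline j}=ce^u$ of (\ref{1010}) on $\mathbb B^n\setminus\{0\}$, with the boundary condition $u|_{\partial\mathbb B^n}=\infty$.

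First I would exploit the standing hypothesis $K_\Gamma(0,0)\ne 0$. Since $K_\Gamma(z,\overline z)=\frac{1}{|\Gamma|}\sum_{\gamma\in\Gamma}K(\gamma z,\overline z)\det\gamma$ is real-analytic on all of $\mathbb B^n$ and strictly positive at $0$, the function $u=\log K_\Gamma$ is smooth near $0$, and hence $\omega_\Gamma$ extends smoothly across the branch point to all of $\mathbb B^n$; the Monge-Amp\`ere equation then holds on the whole ball by continuity, and at the origin $\det u_{i\overline j}(0)=cK_\Gamma(0,0)>0$, so the extended form is a genuine positive-definite metric there. Because $K_\Gamma\sim c'(1-|z|^2)^{-(n+1)}$ near $\partial\mathbb B^n$, the metric $\omega_\Gamma$ is complete toward the boundary. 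Thus $\omega_\Gamma$ is a complete K\"ahler-Einstein metric on all of $\mathbb B^n$ with the same negative Einstein constant as the Bergman metric $i\partial\overline\partial\log K$ of $\mathbb B^n$, so the uniqueness part of Cheng-Yau \cite{CY80} forces $\omega_\Gamma=i\partial\overline\partial\log K$ on $\mathbb B^n$.

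The decisive step is to convert this equality of metrics into the triviality of $\Gamma$. From $\omega_\Gamma=i\partial\overline\partial\log K$ the difference $\log(K_\Gamma/K)$ is pluriharmonic on the simply connected ball, so I can write it as $g(z)+\overline{g(z)}$ for a holomorphic $g$ and polarize to obtain $K_\Gamma(z,\overline w)=K(z,\overline w)\,e^{g(z)}\,\overline{e^{g(w)}}$ for $(z,w)$ near the diagonal. I would then feed in the two competing transformation laws: the averaging formula of Lemma \ref{19-11-lem1} gives $K_\Gamma(\tau z,\overline w)=\det(\tau)^{-1}K_\Gamma(z,\overline w)$ for every $\tau\in\Gamma$, while $K(z,\overline w)=\frac{n!}{\pi^n}(1-z\cdot\overline w)^{-(n+1)}$ gives $K(\tau z,\overline w)/K(z,\overline w)=\big((1-z\cdot\overline w)/(1-\tau z\cdot\overline w)\big)^{n+1}$. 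Combining these with the polarized identity yields $\big((1-z\cdot\overline w)/(1-\tau z\cdot\overline w)\big)^{n+1}=\det(\tau)^{-1}e^{g(z)-g(\tau z)}$, whose left-hand side genuinely depends on $w$ unless $\tau z=z$, while the right-hand side is independent of $w$. Since a non-identity $\tau\in\Gamma$ fixes only $0$, this is impossible, so $\Gamma=\{\mathrm{id}\}$, contradicting non-triviality and proving that $\omega_\Omega^B$ is not K\"ahler-Einstein.

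The main obstacle, and the place I would be most careful, is the middle paragraph: one must rigorously justify that the Monge-Amp\`ere solution extends across the branch point $0$ to a bona fide complete K\"ahler-Einstein metric on the entire ball (interior regularity and nondegeneracy at $0$, completeness at $\partial\mathbb B^n$) and that the Einstein normalizations of $\omega_\Gamma$ and of the ball's Bergman metric agree, so that Cheng-Yau uniqueness \cite{CY80} genuinely applies. Everything after that is a short polarization-plus-symmetry computation, which is the concrete form of the \emph{uniformization} step invoked above.
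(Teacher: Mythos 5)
Your proposal is correct, and its skeleton coincides with the paper's: from the K\"ahler--Einstein hypothesis and Theorem \ref{4-24-l1} one gets the Monge--Amp\`ere equation (\ref{1010}) on ${\mathbb B}^n\setminus\{0\}$; the hypothesis $K_\Gamma(0,0)\neq 0$ lets $u=\log K_\Gamma$ extend smoothly across the origin, so by continuity $\omega_\Gamma$ becomes a complete K\"ahler--Einstein metric on all of ${\mathbb B}^n$ with Einstein constant $-1$; Cheng--Yau uniqueness \cite{CY80} then identifies $\omega_\Gamma$ with the hyperbolic (Bergman) metric of the ball. The one place you genuinely diverge is the final step: the paper disposes of it with a bare appeal to ``the uniformization theorem,'' whereas you extract $\Gamma=\{\mathrm{id}\}$ directly by writing $\log(K_\Gamma/K)=g+\overline{g}$ on the simply connected ball, polarizing to $K_\Gamma(z,\overline w)=K(z,\overline w)e^{g(z)}\overline{e^{g(w)}}$, and playing the $\Gamma$-transformation law $K_\Gamma(\tau z,\overline w)=\det(\tau)^{-1}K_\Gamma(z,\overline w)$ (which follows from Lemma \ref{19-11-lem1} by reindexing the sum) against the explicit $w$-dependence of $K(\tau z,\overline w)/K(z,\overline w)$. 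This substitution is sound and arguably an improvement: it is elementary, self-contained, and makes precise a step the paper leaves implicit; the only points to spell out fully are that $K_\Gamma>0$ on all of ${\mathbb B}^n$ (so that $\log(K_\Gamma/K)$ is globally defined) and that positivity of $\det u_{i\overline j}(0)$ together with positive definiteness on the punctured ball forces positive definiteness at $0$ by continuity of the eigenvalues --- both of which you note.
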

%
%\begin{example}
% Let $\Omega$ be the Stein space with boundary defined
%by
%$$\Omega=\{w=(w_1, w_2, w_3)\in\mathbb C^3: \sum_{j=1}^3|w_j|^2<1, w_2^2=2w_1w_3\}.$$
%Let $\pi: \mathbb B^2\rightarrow\Omega$ be given by $\pi(z_1, z_2)=(z_1^2, \sqrt 2 z_1z_2, z_2^2)$. Note that $\pi(z_1, z_2)=\pi(-z_1, -z_2)$ and $\pi(0)=0$ and $\pi$ is a $2$ to $1$ covering map.
%Let $\Gamma=\{\gamma_1, \gamma_2\}\subset{\rm Aut}(\mathbb B^2)$ be a subgroup of ${\rm Aut}(\mathbb B^2)$. Here, $\gamma_1=id$ and $ \gamma_2={\rm diag}(-1, -1)$. Then $\Omega$ is biholomorphic to $\mathbb B^2/\Gamma$. By direct calculation,
%\begin{equation}
%\begin{split}
%K_{\Gamma}&=\frac{1}{|\Gamma|}\sum_{\gamma\in\Gamma}K(\gamma z, \overline z)\det\gamma=\frac{1}{\pi^2}\left[\frac{1}{(1-|z|^2)^3}+\frac{1}{(1+|z|^2)^3}\right]\\
%&=\frac{2}{\pi^2}\frac{1+3|z|^4}{(1-|z|^4)^3}.
%\end{split}
%\end{equation}
%Then $K_{\Gamma}(0)=\frac{2}{\pi^2}$. Hence, the Bergman metric over
%$\Omega$ is not K\"ahler-Einstein.

% Set $u=\log K_{\Gamma}$. Thus, $u=\log{\frac{2}{\pi^2}}+6|z|^4+O(|z|^8)$. By direct calculation
%\begin{equation}
%\begin{split}
%&u_{1\overline 1}=12(|z|^2+|z_1|^2)+O(|z|^6) , ~u_{1\overline 2}=12 \overline z_1 z_2+O(|z|^6) \\
%&u_{2\overline 1}=12\overline z_1 z_2+O(|z|^6) , ~~~~~~~~~~~u_{2\overline 2}=12(|z|^2+|z_2|^2)+O(|z|^6).
%^\end{split}
%\end{equation} Thus, $\det u_{i\overline j}(0)=0$ and
%it follows that $u=\log {K_\Gamma}$ does not satisfy the
%Monge-Ampere equation (\ref{MAequ}).  Hence, the Bergman metric on
%$\Omega$ is not K\"ahler-Einstein.
%\end{example}

\begin{example}
Suppose $\Omega=\mathbb B^3/\Gamma$, where $\Gamma=\{\gamma_1, \gamma_2\}$ and $\gamma_1=id, \gamma_2={\rm diag}(-1, -1, -1)$.
\begin{equation}
K_{\Gamma}=\frac{3}{\pi^3}\left[ \frac{1}{(1-|z|^2)^4}-\frac{1}{(1+|z|^2)^4} \right]=\frac{4!}{\pi^3}\frac{|z|^2(1+|z|^4)}{(1-|z|^4)^4}.
\end{equation}
Thus, $$K_{\Gamma}(0, 0)=0.$$
Set $u=\log K_{\Gamma}$.
Then \begin{equation}
\begin{split}
u&=\log {\frac{4!}{\pi^3}}+\log{|z|^2}+\log{(1+|z|^4)}-\log{(1-|z|^4)^4}\\
&=\log {\frac{4!}{\pi^3}}+\log |z|^2+5|z|^4+O(|z|^8).
\end{split}
\end{equation}
By direct calculation,
\begin{equation}
\begin{split}
u_{1\overline 1}&=\frac{|z_2|^2}{|z|^4}+10|z|^2+10|z_1|^2+O(|z|^6), ~u_{1\overline 2}=-\frac{1}{|z|^4}\overline z_1 z_2+10\overline z_1 z_2+O(|z|^6)\\
u_{2\overline 1}&=-\frac{1}{|z|^4}z_1\overline z_2+10z_1\overline z_2+O(|z|^6),~~~~~ u_{2\overline 2}=\frac{|z_1|^2}{|z|^4}+10|z|^2+10|z_2|^2+O(|z|^6).
\end{split}
\end{equation}
Then $\det u_{i\overline j}(0)=20$, but $K_{\Gamma}(0, 0)=0$. Thus,  it follows that $u=\log {K_\Gamma}$ does not satisfy the Monge-Ampere equation (\ref{MAequ}).  Hence, the Bergman metric on $\Omega$ is not K\"ahler-Einstein.
\end{example}

When $n=1$ and for any finite subgroup $\Gamma\subset{\rm
Aut}(\mathbb B^1)$,  assume $|\Gamma|=r, 1\leq r<\infty.$ It is well
known that $\Gamma=\{1, e^{2\pi i\frac{1}{r}}, \cdots, e^{2\pi
i\frac{r-1}{r}}\}.$ Thus, on $\mathbb B^1$
\begin{equation}
\begin{split}
K_{\Gamma}(z, \overline z)
&=\frac{1}{\pi|\Gamma|}\sum_{\gamma\in\Gamma}\frac{1}{(1-\gamma z\cdot\overline z)^{2}}\det\gamma=\frac{1}{\pi r}\sum_{j=1}^r\frac{1}{[1-e^{2\pi i\frac{j}{r}}|z|^2]^2}e^{2\pi i \frac{j}{r}}.
\end{split}
\end{equation}
By Taylor's expansion,
\begin{equation}
K_{\Gamma}=\frac {1}{\pi}\sum_{j=1}^r\sum_{k=0}^\infty (k+1) e^{2\pi i\frac{j}{r}(k+1)}|z|^{2k}=\frac{r}{\pi}\sum_{k=1}^\infty k|z|^{2(kr-1)}=\frac{r}{\pi}\frac{|z|^{2(r-1)}}{(1-|z|^{2r})^2}.
\end{equation}
Set $u=\log K_\Gamma.$ Then $u_{1\overline 1}=2r^2\frac{|z|^{2(r-1)}}{(1-|z|^{2r})^2}$.
 Since $c=2\pi r$, then one sees immediately that
%check directly that $u$ satisfies the following equation
\begin{equation}
u_{1\overline 1}=c e^u~\text{on}~\mathbb B^1\setminus\{0\}.
\end{equation} Notice that the sufficient part of Theorem
\ref{4-24-l1} holds even for $n=1$.  We have the following:
\begin{proposition}
For any finite subgroup $\Gamma\subset{\rm Aut}_0(\mathbb B^1)$, its
Bergman metric on ${\rm Reg}(\mathbb B^1/\Gamma)$ is
K\"ahler-Einstein.
\end{proposition}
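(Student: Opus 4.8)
The plan is to reduce the statement to an explicit verification of the Monge--Amp\`ere equation \eqref{MAequ} and then invoke the sufficient half of Theorem \ref{4-24-l1}, after first observing that this half carries over to $n=1$. First I would record the structure of $\Gamma$: every element of $\mathrm{Aut}_0(\mathbb B^1)$ is a rotation, so $\Gamma=\{1,e^{2\pi i/r},\dots,e^{2\pi i(r-1)/r}\}$ with $r=|\Gamma|$. Feeding this into the averaging identity of Lemma \ref{19-11-lem1} and summing the resulting Taylor series (as already carried out above) produces the closed form
\begin{equation*}
K_\Gamma(z,\overline z)=\frac{r}{\pi}\,\frac{|z|^{2(r-1)}}{(1-|z|^{2r})^2}.
\end{equation*}
I would emphasize that this computation only uses $L^2$-integrability to discard the polar Laurent modes of the $f_j$, so it remains legitimate in dimension one even though Hartogs extension is unavailable there.

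Next I would set $u=\log K_\Gamma$ and differentiate. The $\log|z|^{2(r-1)}$ piece is pluriharmonic on $\mathbb B^1\setminus\{0\}$ and drops out of $u_{1\overline 1}$, while the remaining term $-2\log(1-|z|^{2r})$ yields, after a short calculation, $u_{1\overline 1}=2r^2\,|z|^{2(r-1)}/(1-|z|^{2r})^2$. Comparing with $K_\Gamma$ gives the pointwise identity $u_{1\overline 1}=c\,e^{u}$ on $\mathbb B^1\setminus\{0\}$ with $c=2\pi r$, which is exactly the constant $\frac{(n+1)^n\pi^n|\Gamma|}{n!}$ of Theorem \ref{4-24-l1} specialized to $n=1$. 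Since $\det(u_{i\overline j})=u_{1\overline 1}$ in one variable, this says precisely that $u$ satisfies \eqref{MAequ}.

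The last step is to conclude that \eqref{MAequ} forces the Bergman metric to be K\"ahler--Einstein. The necessary direction of Theorem \ref{4-24-l1} required $n\ge 2$ (to extend a pluriharmonic function across the puncture $\{0\}$), but the sufficient direction is dimension-free: from $\det(u_{i\overline j})=c\,e^{u}$ one obtains $\log\det(u_{i\overline j})=\log c+u$, hence the Ricci form $\Theta_\Gamma=-i\partial\overline\partial\log\det(u_{i\overline j})=-i\partial\overline\partial u=-\omega_\Gamma$. Thus $\omega_\Gamma$ is K\"ahler--Einstein on $\mathbb B^1\setminus\{0\}$, and by the lemma identifying $\pi^\ast\omega_\Omega^B$ with $\omega_\Gamma$ the Bergman metric on $\mathrm{Reg}(\mathbb B^1/\Gamma)$ is K\"ahler--Einstein.

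The argument is thus essentially a one-variable computation packaging the formulas established just above, and I do not expect a serious obstruction. The only genuine point of care---and hence the ``main obstacle''---is to confirm that the converse half of Theorem \ref{4-24-l1} never secretly invokes $n\ge 2$; once the Ricci computation above is seen to be valid for every $n$, including the trivial one-dimensional determinant, the proof is complete.
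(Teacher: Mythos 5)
Your proposal is correct and follows essentially the same route as the paper: compute the closed form $K_\Gamma=\frac{r}{\pi}\frac{|z|^{2(r-1)}}{(1-|z|^{2r})^2}$ from the averaging formula of Lemma \ref{19-11-lem1}, verify directly that $u=\log K_\Gamma$ satisfies $u_{1\overline 1}=2\pi r\,e^{u}$ on $\mathbb B^1\setminus\{0\}$, and invoke the sufficient (dimension-free) half of Theorem \ref{4-24-l1}. Your extra remark that $L^2$-integrability alone removes the singularity at $0$ in dimension one (where Hartogs is unavailable) is a worthwhile clarification but does not change the argument.
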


We finish off  this paper by recalling the following generalized
Cheng conjecture formulated in \cite{HX20}:
\begin{conjecture}
Let $\Omega$ be a normal Stein space  with a compact spherical boundary of
complex dimension $n\ge 2$. If the Bergman metric over ${\rm
Reg}(\Omega)$ is K\"ahler-Einstein, then $\Omega$ is biholomorphic
to ${\mathbb B}^n$.
\end{conjecture}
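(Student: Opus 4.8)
The plan is to reduce the conjecture to the ball-quotient situation already analyzed in the last section, and then to show that no nontrivial quotient can carry a Kähler--Einstein Bergman metric. First I would exploit sphericity of $\partial\Omega$. Since $\partial\Omega$ is a compact strongly pseudoconvex spherical CR manifold bounding the normal Stein space $\Omega$ with isolated singularities, the Chern--Moser flat structure \cite{CM74} provides a developing map of its universal CR cover onto $S^{2n-1}=\partial\mathbb{B}^n$ together with a holonomy representation into ${\rm Aut}(\mathbb{B}^n)=PU(n,1)$; equivalently, the complete Kähler--Einstein metric on ${\rm Reg}(\Omega)$ furnished by the hypothesis (which, by Proposition~\ref{2019-09-30-p1} and Lemma~\ref{2019-09-30-lem1}, is asymptotically complex hyperbolic of high order near the boundary) develops ${\rm Reg}(\Omega)$ into $\mathbb{B}^n$. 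The compactness of $\partial\Omega$ and the one-end structure of $\Omega$ should force the holonomy to be a finite subgroup $\Gamma\subset{\rm Aut}(\mathbb{B}^n)$; by the Cartan fixed-point principle such a $\Gamma$ fixes an interior point and is conjugate into the isotropy group $U(n)$, so $\partial\Omega$ is CR-equivalent to the spherical space form $S^{2n-1}/\Gamma$, in particular to an algebraic CR manifold. The corollary of Theorem~\ref{mthm-4-18} (via Huang \cite{H06}) then gives a biholomorphism $\Omega\cong\mathbb{B}^n/\Gamma$ carrying $\omega_\Omega^B$ to $\omega_\Gamma$.

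Next I would invoke the computations of the last section. By the lemma identifying $\pi^\ast\omega_\Omega^B$ with $\omega_\Gamma$ and by Theorem~\ref{4-24-l1}, the Kähler--Einstein condition is equivalent to $u=\log K_\Gamma$ solving $\det u_{i\overline j}=c\,e^u$ on $\mathbb{B}^n\setminus\{0\}$ with $c=\frac{(n+1)^n\pi^n|\Gamma|}{n!}$. If $K_\Gamma(0,0)\neq 0$ the argument following Theorem~\ref{4-24-l1} applies verbatim: $\omega_\Gamma$ extends to a complete Kähler--Einstein metric across $0$, uniqueness of the Cheng--Yau metric \cite{CY80} identifies it with the complex hyperbolic metric, and uniformization forces $\Gamma=\{{\rm id}\}$, whence $\Omega\cong\mathbb{B}^n$. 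So everything reduces to excluding the case $K_\Gamma(0,0)=0$ for nontrivial $\Gamma$. Since $K(z,\overline z)=\frac{n!}{\pi^n}(1-z\cdot\overline z)^{-(n+1)}$, one computes $K_\Gamma(0,0)=\frac{n!}{\pi^n|\Gamma|}\sum_{\gamma}\det\gamma$, so this degenerate case occurs precisely when the determinant character $\gamma\mapsto\det\gamma$ is nontrivial on $\Gamma$.

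The remaining step, which I expect to be the main obstacle, is to rule out the Monge--Ampère equation in this degenerate regime. Expanding $K_\Gamma=\frac{n!}{\pi^n|\Gamma|}\sum_{k\ge 0}\binom{n+k}{n}P_k(z,\overline z)$ with $P_k(z,\overline z)=\sum_{\gamma}\det\gamma\,(\gamma z\cdot\overline z)^k$, I would locate the smallest $k_0$ with $P_{k_0}\not\equiv 0$, giving $K_\Gamma=\frac{n!}{\pi^n|\Gamma|}\binom{n+k_0}{n}P_{k_0}+O(|z|^{2k_0+2})$ and $u=\log P_{k_0}+{\rm const}+o(1)$ near $0$. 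The goal is to show that $\det u_{i\overline j}$ stays bounded away from $0$ as $z\to 0$ while the right-hand side $c\,e^u=c\,K_\Gamma\to 0$, a contradiction; the model case $\Gamma=\{\pm I\}\subset U(3)$ of the Example, where $\det u_{i\overline j}(0)=20$ but $K_\Gamma(0,0)=0$, is exactly this phenomenon. The difficulty is that the leading $\log|z|^2$-type singularity makes the naive top-order coefficient of $\det u_{i\overline j}$ vanish, because the Hessian of $\log P_{k_0}$ degenerates to a rank-$(n-1)$ projection, so the required nonvanishing has to be extracted from the subleading $\Gamma$-relative-invariant corrections. Controlling these uniformly over all finite $\Gamma\subset U(n)$ with nontrivial determinant is a representation-theoretic problem I would attack by putting $P_{k_0}$ into a normal form and combining it with the sphere asymptotics already used in Lemmas~\ref{2019-09-30-lem1} and~\ref{2019-09-30-a11}. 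Establishing this local nonvanishing, together with the finiteness of the holonomy in the first paragraph (which is delicate when $n=2$), is what remains to close the proof.
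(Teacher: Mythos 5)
You are attempting to prove a statement that the paper itself does not prove: it is the final, explicitly labelled \emph{conjecture} (the generalized Cheng conjecture recalled from \cite{HX20}), so there is no proof of record to compare against, and your proposal --- as you yourself concede in its last paragraph --- is a roadmap with two open gaps rather than a proof. The reduction steps you describe are sound and are exactly what the paper's final section is set up for: granting a CR equivalence $\partial\Omega\cong S^{2n-1}/\Gamma$ with $\Gamma$ finite, the corollary following Theorem \ref{mthm1} (via \cite{H06}) yields $\Omega\cong \mathbb{B}^n/\Gamma$; the lemma identifying $\pi^\ast\omega^B_\Omega$ with $\omega_\Gamma$ and Theorem \ref{4-24-l1} convert the Einstein hypothesis into $\det u_{i\overline j}=c\,e^u$ for $u=\log K_\Gamma$; and your computation $K_\Gamma(0,0)=\frac{n!}{\pi^n|\Gamma|}\sum_{\gamma\in\Gamma}\det\gamma$, so that $K_\Gamma(0,0)=0$ precisely when the determinant character of $\Gamma$ is nontrivial, is correct, whence the corollary after Theorem \ref{4-24-l1} does dispose of the nondegenerate case.

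However, the two steps you flag as ``remaining'' are precisely the open content of the conjecture, not technical details. First, the passage from sphericity of $\partial\Omega$ to finiteness of the holonomy (equivalently, $\partial\Omega\cong S^{2n-1}/\Gamma$, which then makes the boundary algebraic so that \cite{H06} applies) is the uniformization theorem of Huang--Xiao \cite{HX20} for $n\ge 3$; for $n=2$ it is open, and your ``the one-end structure should force the holonomy to be finite'' is not an argument --- the developing map of a spherical CR structure need not a priori be a covering onto its image, and the boundary asymptotics of $\omega^B_\Omega$ from Lemma \ref{2019-09-30-lem1} and Proposition \ref{2019-09-30-p1} do not obviously control the holonomy representation. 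Second, the degenerate case $K_\Gamma(0,0)=0$ cannot be excluded by the soft mechanism you sketch ($\det u_{i\overline j}$ bounded away from $0$ while $c\,e^u\to 0$): the paper's own $n=1$ Proposition exhibits $\Gamma=\langle e^{2\pi i/r}\rangle$ with nontrivial determinant character, $K_\Gamma(0,0)=0$, and $u=\log K_\Gamma$ \emph{solving} $u_{1\overline 1}=c\,e^u$ on $\mathbb{B}^1\setminus\{0\}$, with $u_{1\overline 1}\to 0$ at the origin; so the desired nonvanishing of $\det u_{i\overline j}$ must come from genuinely $n\ge 2$ phenomena, established uniformly over all finite $\Gamma\subset U(n)$ with nontrivial determinant. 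The paper verifies this only for the single example $\Gamma=\{\pm I\}\subset U(3)$, where $\det u_{i\overline j}(0)=20$ while $c\,e^{u(0)}=0$, and your own discussion of the rank-$(n-1)$ degeneration of the Hessian of $\log P_{k_0}$ shows the leading term carries no information. Until both gaps are closed, what you have is a correct reduction of the conjecture to the ball-quotient Monge--Amp\`ere rigidity problem (plus the $n=2$ uniformization), not a proof.
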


\begin{center}
{\bf Acknowledgement}
\end{center}

The second author would like to express his gratitude to the Department of Mathematics of Rutgers University for hospitality and providing excellent working conditions during his visit from August 2019 to July 2020.

%%%%%%%%%%%%%%%%%%%%%%%%%%%%%%%%%%%%%%%%%%%%%%%%%%%%%%%%%

\noindent Xiaojun Huang (huangx@math.rutgers.edu),
Department of Mathematics, Rutgers University, New Brunswick, NJ
08903, USA.

\medskip
\noindent Xiaoshan Li (xiaoshanli@whu.edu.cn), School of Mathematics
and Statistics, Wuhan University, Hubei 430072,  China.
\end{document}